\newtheorem{theorem}{Theorem}[section]
\newtheorem{lemma}[theorem]{Lemma}
\newtheorem{conjecture}[theorem]{Conjecture}
\newcommand{\GL}{\mathop{\textrm{GL}}}
\newcommand{\Alt}{\mathop{\textrm{Alt}}}
\newcommand{\Cay}{\mathop{\textrm{Cay}}}
\renewcommand{\wr}{\mathop{\textrm{wr}}}
\newcommand{\Sym}{\mathop{\textrm{Sym}}}
\def\Z#1{{\bf Z}(#1)}
\def\cent#1#2{{\bf C}_{{#1}}({{#2}})}
\def\nor#1#2{{\bf N}_{{#1}}({{#2}})}
\newcommand{\Aut}{\mathop{\mathrm{Aut}}}
\begin{document}

\title[Semiregular elements of large order]{Semiregular elements in cubic vertex-transitive graphs and the restricted Burnside problem}
\author[P. Spiga]{Pablo Spiga}
\address{Pablo Spiga, Dipartimento di Matematica Pura e Applicata,\newline
 University of Milano-Bicocca, Via Cozzi 53, 20126 Milano, Italy} 
\email{pablo.spiga@unimib.it}
\subjclass[2010]{Primary 20B25; Secondary 05E18}

\keywords{vertex-transitive, semiregular}

\begin{abstract}
In this paper,  we prove that the maximal order of a semiregular element in the automorphism group of a cubic vertex-transitive graph $\Gamma$ does not tend to infinity as the number of vertices of $\Gamma$ tends to infinity. This  gives a solution (in the negative) to a conjecture of Peter Cameron, John Sheehan and the author~\cite[Conjecture~$2$]{CSS}.

However, with an application of the positive solution of the restricted Burnside problem,  we show that this conjecture holds true when  $\Gamma$ is either a Cayley graph or an arc-transitive graph.
\end{abstract}

\maketitle

\section{Introduction}

In this paper graphs are finite, connected, with no loops and with no multiple edges. A graph $\Gamma$ is \emph{cubic} if it is regular of valency $3$ and \emph{vertex-transitive} if its automorphism group $\Aut(\Gamma)$ acts transitively on the vertices of $\Gamma$.

A permutation $g$ of a finite set $\Omega$ is said to be {\em semiregular} if $1$ is the only element of the cyclic group $\langle g\rangle$ fixing some point of $\Omega$, that is, in the disjoint cycle decomposition of $g$ all cycles have the same length. By analogy, an automorphism $g$ of a graph $\Gamma$ is semiregular if $g$ is a semiregular permutation in its action on the vertices of  $\Gamma$.

An old conjecture of Maru\v{s}i¡\v{c}, Jordan and Klin asserts that every finite vertex-transitive graph admits
a non-identity semiregular automorphism. This conjecture was first proposed in 1981 by Maru\v{s}i\v{c}~\cite[Problem~$2.4$]{Dragan}, then was posed again by Jordan~\cite{Jordan}, and was finally refined by Klin~\cite{Klin} in $1988$. Now, this wide-open conjecture is known as the \textit{Polycirculant conjecture} and is one of the most interesting old-standing problems in the theory of finite permutation groups. We refer to~\cite{cameronbook,bunch} for more details on this beautiful problem and to~\cite{Ted,GiudiciXu,Klavdija} for some remarkable evidence towards this conjecture.

Maru\v{s}i\v{c} and Scapellato~\cite{Dragan2} have shown that every cubic vertex-transitive graph admits a non-identity semiregular automorphism and hence they have settled the Polycirculant conjecture for the automorphism group of a cubic vertex-transitive graph. In their nice argument  there is little information on the order of the semiregular automorphisms. In fact, their proof allows the existence of cubic vertex-transitive graphs admitting non-identity semiregular automorphisms of order only $2$. However, empirical evidence suggests that cubic vertex-transitive graphs do have semiregular automorphisms of large order, as the number of vertices of the graph grows. To make this statement precise, Cameron, Sheehan and the author have proposed the following conjecture~\cite[Conjecture~$2$]{CSS}.

\begin{conjecture}\label{conj}{\rm There exists a function $f:\mathbb{N}\to \mathbb{N}$ with $\lim_{n\to \infty}f(n)=\infty$ such that, if $\Gamma$ is a cubic vertex-transitive graph with $n$ vertices,  then $\Gamma$ has a semiregular automorphism of order at least $f(n)$.}
\end{conjecture}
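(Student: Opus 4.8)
The abstract announces that Conjecture~\ref{conj} is false in general, so my plan would be to test it rather than prove it outright, and to pin down the hypotheses under which it does survive. The one general mechanism I know for producing semiregular automorphisms of large order has two steps: first realise $\Aut(\Gamma)$, or a suitable large subgroup of it, as a group generated by a bounded number of elements; then show that the absence of a semiregular automorphism of order exceeding $m$ forces $\Aut(\Gamma)$ to have exponent bounded in terms of $m$. The positive solution of the restricted Burnside problem then bounds $|\Aut(\Gamma)|$, hence $n=|V\Gamma|$, in terms of $m$, and the contrapositive produces the required function $f$. For an arbitrary cubic vertex-transitive graph the first step already breaks down, since $\Aut(\Gamma)$ need not be boundedly generated, so I would hunt for a counterexample among graphs whose automorphism group is a large group of bounded exponent --- for instance of exponent $4$, where every cyclic subgroup, and hence every semiregular automorphism, has order at most $4$, regardless of how many vertices $\Gamma$ has. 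Such groups are finite yet unbounded, and the task becomes to realise one of them, or a suitable section of it, as the automorphism group of a connected cubic graph; this cannot be achieved with a Cayley graph, because there the connection set has only three elements and the regular subgroup would then be a $3$-generated group of bounded exponent, which the restricted Burnside problem bounds. The main difficulty in the negative direction is therefore to control the \emph{full} automorphism group of the graph one constructs, not merely the group used in the construction.

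For cubic \emph{Cayley} graphs the scheme above works cleanly. Let $\Gamma=\Cay(G,S)$; connectedness gives $\langle S\rangle=G$ and $|S|=3$, whence $G$ is generated by at most three elements, and $G$ acting on itself by right multiplication is a regular --- in particular semiregular --- subgroup of $\Aut(\Gamma)$, so every nonidentity element of $G$ is a semiregular automorphism of $\Gamma$. Thus if $\Gamma$ has no semiregular automorphism of order greater than $m$, every element of $G$ has order at most $m$, so $G$ has exponent dividing $\mathrm{lcm}(1,2,\dots,m)$, and the restricted Burnside problem bounds $n=|G|$ by some $B(m)$ depending only on $m$. Since $B$ is non-decreasing and unbounded, letting $f(n)$ be the least $m$ with $B(m)\ge n$ gives a function with $\lim_{n\to\infty}f(n)=\infty$ satisfying the conjecture.

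For \emph{arc-transitive} cubic graphs, put $G=\Aut(\Gamma)$. By Tutte's theorem the vertex stabiliser $G_v$ has order dividing $48=2^4\cdot3$. Hence $G=\langle G_v,g\rangle$ for any $g\in G$ reversing an edge through $v$, so $G$ is generated by a bounded number of elements; and every element of $G$ of order coprime to $6$ is semiregular, since no nontrivial power of it --- whose order is again coprime to $|G_v|$ --- can be conjugated into $G_v$, and likewise the $\{2,3\}'$-part of any element of $G$ is semiregular. Consequently, if $G$ has no semiregular automorphism of order greater than $m$, the $\{2,3\}'$-parts of the element orders of $G$ are at most $m$, and it remains only to bound the $2$-parts and the $3$-parts of element orders in terms of $m$; once this is done, $\exp(G)$ is bounded in terms of $m$ and the restricted Burnside problem concludes the argument as in the Cayley case. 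This last bound is the crux. The obstruction is that an automorphism $g$ whose order is a large power of $2$ fails to be semiregular exactly when the unique involution of $\langle g\rangle$ fixes a vertex, and then no nontrivial power of $g$ is semiregular either --- with the same phenomenon at the prime $3$. To rule this out I would analyse the centraliser of such a ``bad'' involution acting on its fixed-vertex set, which in a cubic graph is a disjoint union of edges, together with its analogue for elements of order $3$, or equivalently exploit that $G$ is a finite quotient of one of the finitely many possibilities for the amalgam $G_v *_{G_{v,e}} G_e$, tracking how torsion of large $2$-power or $3$-power order is forced to move vertices and so produces a semiregular element of comparable order.
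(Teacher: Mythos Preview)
Your treatment of the Cayley case is correct and matches the paper's. The other two parts have genuine gaps.

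\textbf{Negative direction.} Your plan of realising a large group of bounded exponent (say exponent~$4$) as $\Aut(\Gamma)$ for a cubic vertex-transitive $\Gamma$ cannot succeed, and the obstacle is exactly the mechanism you invoke for Cayley graphs. Any cubic $G$-vertex-transitive graph admits a $3$-generated transitive subgroup (this is Lemma~\ref{3gen}, and your Cayley argument essentially proves it); if $\Aut(\Gamma)$ had bounded exponent so would this subgroup, whence the restricted Burnside problem bounds its order and hence $|V\Gamma|$. The paper's construction in Section~\ref{sec:construction} is therefore necessarily of a different nature: it builds $G=V\rtimes Q$ with $V$ an extraspecial $3$-group of exponent~$3$ and order $3^{2^m+1}$ and $Q$ dihedral of order $2^{m+2}$, acting on cosets of $H=\langle a^{2^m}\rangle$. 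The exponent of $G$ is \emph{unbounded} (the element $a$ has order $2^{m+1}$); what is bounded is the order of the \emph{semiregular} elements, because every element of $G$ of $2$-power order at least~$4$ powers down to $a^{2^m}\in H$ and hence fixes a point (Lemma~\ref{semiregularG}). The hard work, as you correctly anticipate, is showing $\Aut(\Gamma)=G$ (Lemmas~\ref{2},~\ref{3} and~\ref{4}).

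\textbf{Arc-transitive case.} You correctly isolate the crux---bounding the $2$- and $3$-parts of element orders---but do not prove it, and the centraliser and amalgam sketches are not a proof. The counterexample family above exhibits precisely the obstruction you fear (a $2$-element of order $2^{m+1}$ whose unique involution lies in a point stabiliser) inside a cubic vertex-transitive graph, so arc-transitivity must be used in an essential way, and it is not clear how your sketch does so. The paper proceeds quite differently and never attempts to bound $\exp(G)$. It passes to a normal quotient: take $N\unlhd G$ maximal with $\Gamma/N$ still cubic, so that $N$ is semiregular (and hence $|N|$ is controlled by restricted Burnside once the quotient is small). In the quotient any minimal normal subgroup has at most two vertex-orbits, since otherwise the further quotient would again be cubic, contradicting maximality of $N$. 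Theorem~\ref{qubiqu}---a separate result proved via Lemmas~\ref{easy}, \ref{simplegroups} and~\ref{lemma14}, and hence resting on the classification of finite simple groups---then supplies a semiregular element of large order in the quotient, which lifts to $G$ because $N$ is semiregular.
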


A short answer to this  open problem is  ``no".
\begin{theorem}\label{neg}
There exists no function $f$ satisfying Conjecture~\ref{conj}.
\end{theorem}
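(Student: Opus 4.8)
The strategy is to construct an explicit infinite family of cubic vertex-transitive graphs in which every semiregular automorphism has bounded order. The natural candidates are Cayley graphs on an infinite family of groups that are, on the one hand, generated by three involutions (or two elements, one of order two) so that the resulting Cayley graph is cubic, and, on the other hand, have the property that every cyclic subgroup has bounded order — that is, groups of bounded exponent. If we take $\Gamma_i = \Cay(G_i, S_i)$ with $G_i$ a finite quotient of a fixed infinite group $B$ of bounded exponent $e$, then the right-regular representation embeds $G_i$ into $\Aut(\Gamma_i)$ as a regular, hence semiregular, subgroup, but every element of this subgroup has order dividing $e$. The difficulty is that $\Aut(\Gamma_i)$ may be strictly larger than $G_i$, and could a priori contain semiregular elements of unbounded order coming from outside the regular subgroup. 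So the heart of the argument is to choose the family so that $\Aut(\Gamma_i)$ is not much bigger than $G_i$ — ideally a graphical regular representation, or at least so that $\Aut(\Gamma_i)$ itself has bounded exponent, or bounded order of point stabiliser combined with a structural restriction.

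Concretely, I would proceed as follows. First, invoke a known result producing an infinite family of finite groups $G_i$ of bounded exponent with unbounded order that admit cubic graphical regular representations (GRRs): for instance, one can use $2$-generated groups of exponent dividing some fixed $e$ built from free objects in a variety, together with the fact (Godsil, and subsequent work on GRRs) that ``most'' finite groups admit a GRR with a connection set of size $3$, or one can directly exhibit suitable generating sets of involutions. Second, for such a family, $\Aut(\Gamma_i)$ equals the regular representation of $G_i$, so every automorphism of $\Gamma_i$ has order dividing $e$; in particular every semiregular automorphism has order at most $e$. Third, since $|V(\Gamma_i)| = |G_i| \to \infty$ while the maximal order of a semiregular automorphism stays $\le e$, no function $f$ with $f(n)\to\infty$ can satisfy the conclusion of Conjecture~\ref{conj}. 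If a clean GRR family of bounded exponent is not available off the shelf, the fallback is to drop the GRR requirement and instead bound $|\Aut(\Gamma_i)|$ crudely: for a cubic graph on $n$ vertices the order of a vertex stabiliser is at most $3\cdot 2^{47}$ by Tutte--Sims theory, so $|\Aut(\Gamma_i)| \le 3\cdot 2^{47}\,n$; this does not by itself bound the exponent, so one still needs the connection set chosen inside a bounded-exponent group together with an argument that the extra automorphisms do not create large cyclic subgroups — which is exactly where one would exploit detailed knowledge of the specific family.

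The main obstacle, and the place where real work is needed, is controlling $\Aut(\Gamma_i)$: producing an infinite family of cubic Cayley graphs on bounded-exponent groups for which one can prove that the full automorphism group still has bounded exponent (equivalently, contains no semiregular element of large order). I expect the cleanest route is to locate, in the literature on graphical regular representations, an explicit infinite sequence of finite $2$-generated groups of a fixed finite exponent each admitting a cubic GRR — groups of exponent $6$ or $12$, for example, are plausible candidates given what is known — and then the theorem follows immediately from the three-step outline above. A secondary subtlety is ensuring the Cayley graphs are connected (the connection set generates $G_i$) and simple (the connection set is inverse-closed, contains no identity, and — if one wants valency exactly $3$ — consists either of three involutions or of an involution together with a non-involution and its inverse); these are bookkeeping conditions that the chosen family must satisfy by construction.
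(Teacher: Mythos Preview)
Your approach has a fatal flaw that the paper itself highlights in the introduction: there is \emph{no} infinite family of finite groups of bounded exponent that are $3$-generated (or $2$-generated) and of unbounded order. This is precisely the content of the positive solution of the restricted Burnside problem. Since a connected cubic Cayley graph $\Cay(G_i,S_i)$ forces $G_i=\langle S_i\rangle$ with $|S_i|=3$, any family with $\exp(G_i)\le e$ has $|G_i|$ bounded by a constant depending only on $e$, and your sequence of graphs is finite. In particular, your ``known result producing an infinite family of finite groups $G_i$ of bounded exponent with unbounded order that admit cubic GRRs'' does not exist. This is not a technical obstacle to be patched; it is exactly why the paper proves in Theorem~\ref{thrm} that Conjecture~\ref{conj} \emph{holds} for Cayley graphs.

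The paper's construction is therefore necessarily non-Cayley. The idea is to build a group $G=V\rtimes Q$ where $V$ is an extraspecial $3$-group of order $3^{2^m+1}$ and $Q$ is dihedral of order $2^{m+2}$; the group $G$ has \emph{unbounded} exponent (the element $a\in Q$ has order $2^{m+1}$), so there is no contradiction with the restricted Burnside problem. One then takes the coset graph on $G/H$ for $H=\langle a^{2^m}\rangle$ of order $2$. The large-order elements of $G$ are not semiregular precisely because some power of each lands in a conjugate of the nontrivial point stabiliser $H$; a short argument (Lemma~\ref{semiregularG}) shows every semiregular element of $G$ has order at most $6$. The substantial work is then Lemmas~\ref{2}--\ref{4}, which establish that $\Aut(\Gamma)=G$, so no extra semiregular automorphisms appear from outside $G$. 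Your instinct that controlling $\Aut(\Gamma_i)$ is the crux is correct, but the mechanism for bounding semiregular orders must come from a nontrivial stabiliser rather than from bounded exponent.
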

\noindent However, much lies behind this laconic answer and, in fact, we prove that Conjecture~\ref{conj} holds true for a large class of cubic vertex-transitive graphs. 
Let $X$ be a group and let $Y$ be an inverse-closed and identity-free subset of $X$. The \emph{Cayley graph} $\Cay(X,Y)$ has vertex-set $X$ and two vertices $x$ and $x'$ are adjacent if and only if  $x(x')^{-1}\in Y$. We say that a graph $\Gamma$ is a Cayley graph if $\Gamma\cong \Cay(X,Y)$, for some group $X$ and some subset $Y\subseteq X$. Moreover,  a graph $\Gamma$ is said to be \textit{arc-transitive} if $\Aut(\Gamma)$ acts transitively on the ordered pairs of adjacent vertices of $\Gamma$.

The \textit{restricted Burnside problem}~\cite[Section~$1.1$]{VaughanLee} asks whether there exists a function $g:\mathbb{N}\times\mathbb{N}\to \mathbb{N}$ such that, if $G$ is a finite $d$-generated group of exponent $e$, then $|G|\leq g(d,e)$.  In this paper, we use  the positive solution of the  restricted Burnside problem~\cite{Efim1,Efim2} to prove the following. 
\begin{theorem}\label{thrm}
There exists a function $f:\mathbb{N}\to\mathbb{N}$ with $\lim_{n\to \infty}f(n)=\infty$ such that, if $\Gamma$ is a cubic Cayley or arc-transitive graph  with $n$ vertices,  then $\Gamma$ has a semiregular automorphism of order at least $f(n)$.
\end{theorem}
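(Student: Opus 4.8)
The plan is to reduce both cases to the following consequence of the positive solution of the restricted Burnside problem: a finite $d$-generated group with no element of order at least $m$ has exponent dividing $\mathrm{lcm}(1,\dots ,m-1)$, hence order at most $g(d,\mathrm{lcm}(1,\dots ,m-1))$. So it is enough to exhibit, inside $\Aut(\Gamma)$, a subgroup $X$ with (i)~$X$ acts semiregularly on $V(\Gamma)$, (ii)~$X$ is $d$-generated for an absolute constant $d$, and (iii)~$|X|\ge n/c$ for an absolute constant $c$. Indeed, (i) makes every element of $X$ a semiregular automorphism of $\Gamma$, and by (ii)--(iii) and the displayed bound $X$ has an element of order at least $f(n)$, where $f$ is the unbounded non-decreasing function inverse to $m\mapsto c\cdot g(d,\mathrm{lcm}(1,\dots ,m))$. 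The whole task is to produce such an $X$.

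For a cubic Cayley graph $\Gamma\cong\Cay(X,Y)$ this is immediate: $X$ acts regularly, hence semiregularly, on the vertices, and since $\Gamma$ is connected of valency $3$ we have $|Y|\le 3$ and $\langle Y\rangle=X$, so $X$ is at most $3$-generated; thus (i)--(iii) hold with $c=1$, $d=3$. For the arc-transitive case the key input is Tutte's theorem, $|\Aut(\Gamma)_v|\le 48$. Writing $G:=\Aut(\Gamma)$, I would extract two consequences: first, $G$ is generated by a vertex stabilizer together with one further element, so $G$ is boundedly generated; second, any element of $G$ whose order is coprime to $|G_v|$ (in particular coprime to $6$) is semiregular, since a cyclic group of such order meets every conjugate of $G_v$ trivially. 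In particular every element of $p$-power order with $p\ge 5$ is semiregular.

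The arc-transitive case I would then settle by a normal-quotient reduction. If $G$ is neither quasiprimitive nor biquasiprimitive on $V(\Gamma)$, pick $N\trianglelefteq G$ maximal subject to having at least three orbits; using that $\Gamma$ is connected of valency $3$ and $G$-arc-transitive, and that $N_v\trianglelefteq G_v$ with $G_v$ transitive on the three neighbours of $v$, one sees that $N$ is semiregular, that $\Gamma_N$ is a connected cubic graph on $m$ vertices (so $n=m|N|$), and that $G/N$ acts faithfully, arc-transitively, and — by maximality of $N$ — quasiprimitively or biquasiprimitively on $V(\Gamma_N)$. If $m$ is bounded by an absolute constant, then $[G:N]=m|G_v|$ is bounded, $N$ is boundedly generated (Reidemeister--Schreier), and $X:=N$ works. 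If $m$ is large I would appeal to the known description of quasiprimitive and biquasiprimitive cubic arc-transitive graphs (following Praeger's O'Nan--Scott-type theorems, with valency $3$ ruling out the product, twisted-wreath and diagonal configurations): $G/N$ either has a semiregular normal subgroup with at most two orbits on $V(\Gamma_N)$ — i.e. $\Gamma_N$ is a Cayley or bipartite bi-Cayley graph, or is of affine type, and such a subgroup then has bounded index, so its preimage in $G$ is semiregular on $V(\Gamma)$ with at most two orbits and bounded index, giving $X$ — or else $G/N$ is almost simple with simple socle $T$, where necessarily $|T|\ge m/c_0\to\infty$. In the last case let $\widehat T$ be the preimage of $T$ in $G$; an orbit-count gives $|\widehat T_v|\le|G_v|\le 48$ while $|\widehat T|=|N||T|\to\infty$, and using that a finite simple group of large order contains an element of prime order $\ell\ge 5$ with $\ell$ unbounded (a classification fact), I lift such an element of $T$ to $\widehat T$: the lift has order divisible by $\ell$, so its $\ell$-part is an element of $\widehat T$ of $\ell$-power order at least $\ell$, hence semiregular because $\ell$ is coprime to $|G_v|$. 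Finally, if $G$ is itself quasiprimitive or biquasiprimitive, the same dichotomy applied to $\Gamma$ directly concludes (affine type now forcing $n\le 8$, Cayley type handled above, bi-Cayley type giving a bounded-index semiregular subgroup, almost simple type giving a large element of prime order $\ge 5$ inside $T\le G$).

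The step I expect to be the main obstacle is precisely the presence of $2$ and $3$ in $|G_v|$: elements of $\Aut(\Gamma)$ of large $2$- or $3$-power order need not be semiregular, so one cannot feed $\Aut(\Gamma)$ straight into the restricted Burnside bound and read off a large semiregular element. The normal-quotient reduction plus the classification of basic cubic arc-transitive graphs is there to route around this — producing either a genuinely semiregular, boundedly-generated subgroup of bounded index, or an almost-simple configuration furnishing elements of large order coprime to $6$. The delicate points will be pinning down exactly which O'Nan--Scott types survive under valency $3$ and doing the orbit-size bookkeeping cleanly in the biquasiprimitive and almost-simple branches.
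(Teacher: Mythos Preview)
Your plan follows the same architecture as the paper: the Cayley case is handled identically (restricted Burnside applied to the regular $3$-generated subgroup), and the arc-transitive case proceeds by the same normal-quotient reduction, with the bounded-quotient branch disposed of via Reidemeister--Schreier plus restricted Burnside on the semiregular kernel $N$. The genuine difference is in the large-quotient branch. The paper does \emph{not} invoke O'Nan--Scott: instead it proves a stand-alone result (Theorem~\ref{qubiqu}) that for any minimal normal subgroup $N\cong T^\ell$ with a bounded number of orbits, a pigeonhole argument on coordinates (Lemma~\ref{lemma14}) bounds $\ell$ in terms of $|T|$; combined with the fact that large simple groups contain elements of large order coprime to $6$ (Lemma~\ref{simplegroups}), this produces the semiregular element directly. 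Your route through the quasiprimitive/biquasiprimitive classification reaches the same endpoint (a large prime in the simple factor $T$) and is valid, but two points of care: twisted-wreath type is not ``ruled out'' by valency~$3$---its socle is regular, so it lands in your semiregular-socle branch; and product-action type is not obviously excluded either, though this is harmless since the number $k$ of factors is then bounded (from $|T_\omega|^k\le |G_v|\le 48$), whence $|T|\to\infty$ and your large-prime argument applies verbatim. The paper's approach buys self-containment (no external structure theory for quasiprimitive groups, and the argument works uniformly for any bounded number of $N$-orbits), while yours makes the structural dichotomy explicit at the cost of importing and checking a case analysis.
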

Before concluding this introductory section and moving to the proof of Theorems~\ref{neg} and~\ref{thrm} we show that  Conjecture~\ref{conj} is very much related
to the restricted Burnside problem with $d=3$.  In fact,  for some families of cubic graphs, Conjecture~\ref{conj} is equivalent to the restricted Burnside problem (for the class of finite groups generated either by three involutions or by one involution and one non-involution).

Recall that a graph is called a \emph{graphical regular representation} or \emph{GRR} if its automorphism group acts regularly on its vertices.  Now, if $\Gamma$ is a GRR, then every element of $\Aut(\Gamma)$ is semiregular. In particular, Conjecture~\ref{conj} restricted to a cubic  GRR $\Gamma$ asks whether the maximal element order of $\Aut(\Gamma)$ tends to infinity as the number of vertices of $\Gamma$ tends to infinity. Clearly, as $\Aut(\Gamma)$ acts regularly on the vertices of $\Gamma$, we get that $\Gamma$ has $|\Aut(\Gamma)|$ vertices. Moreover,  a moment's thought gives that the maximal element order of $\Aut(\Gamma)$ tends to infinity if and only if the exponent of $\Aut(\Gamma)$ tends to infinity. Finally, as $\Gamma$ is cubic, the group $\Aut(\Gamma)$ is generated either by three involutions or by one involution and one non-involution. This shows that this particular instance of Conjecture~\ref{conj} is equivalent to the restricted Burnside problem with $d=3$: in fact, the exponent of $\Aut(\Gamma)$ tends to infinity if and only if $|\Aut(\Gamma)|$ tends to infinity.

Now,~\cite[Theorem~$1.2$]{PSV} shows that the class of cubic GRRs is far from being sparse and indeed  McKay and Praeger conjecture~\cite{McKayPraeger} that most cubic vertex-transitive graphs are GRRs. In particular, Conjecture~\ref{conj} for cubic GRRs is a rather interesting and important case.

Observe that ultimately the proof of Theorem~\ref{thrm} relies on the Classification of the Finite Simple Groups. In fact, the Hall-Higman reduction~\cite{HH} of the restricted Burnside problem to the case of prime-power exponent uses the Schreier conjecture. So, since our proof of Theorem~\ref{thrm} is inevitably dependent upon the CFSGs, we feel free to use such a powerful tool in other parts of our argument.

Finally, we do not try to optimize the choice of the function $f$ in Theorem~\ref{thrm} (which we  believe grows rather slowly) and we prove only an ``existence" result. For example, using the recent census of  cubic vertex-transitive graphs~\cite{census}, we see that there exists a cubic Cayley  graph $\Gamma$ with $1152$ vertices and admitting semiregular automorphisms of order at most $6$. For a rather more exotic example see Section~\ref{example}.

We conclude this introductory section, by pointing out the following result that is needed in our proof of Theorem~\ref{thrm} and that might be of independent interest. (A graph $\Gamma$ is said to be $G$-\emph{vertex-transitive} if $G$ is a subgroup of $\Aut(\Gamma)$ acting transitively on the vertex-set $V\Gamma$ of $\Gamma$.)

\begin{theorem}\label{qubiqu}
There exists a function $f:\mathbb{N}\times \mathbb{N}\to \mathbb{N}$ with $$\lim_{n\to \infty}f(n,m)=\infty,\quad\textrm{ for each }m\in \mathbb{N},$$ such that, if  $\Gamma$ is a cubic $G$-vertex-transitive graph with $n$ vertices and $N$ is a minimal normal subgroup of $G$ with $m$ orbit on $V\Gamma$, then $G$ contains a semiregular element of order at least $f(n,m)$. 
\end{theorem}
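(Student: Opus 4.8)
The plan is the following. Since $N$ is a minimal normal subgroup of $G$, write $N\cong T^k$ for a finite simple group $T$ (abelian or not) and some $k\ge 1$. As $N\trianglelefteq G$, the $N$-orbits form a $G$-invariant partition of $V\Gamma$ into $m$ blocks $\Delta_1,\dots,\Delta_m$, each of size $s:=n/m$; in particular $s\to\infty$ when $n\to\infty$ with $m$ fixed. First I would prove that $n$ is bounded above by a function of $m$ and $\exp(T)$ alone; granting this, for fixed $m$ the exponent $\exp(T)$ must tend to infinity with $n$, and from a ``large'' simple group $T$ one extracts a semiregular element of comparably large order.

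For the upper bound on $n$ I would use two ingredients. The first is the known fact that the vertex-stabiliser $G_v$ of a connected cubic vertex-transitive graph has order at most an absolute constant $c_0$. This has two consequences. Since $\Gamma$ is connected, $G$ is generated by $G_v$ together with at most three further elements, each carrying $v$ to a neighbour, hence $G$ is $d_0$-generated with $d_0\le 3+\log_2 c_0$. And, writing $L$ for the subgroup of $G$ fixing each block $\Delta_j$ setwise, we have $N\le L$, so $L$ is transitive on each $\Delta_j$ and $|L|=s\,|L_v|\le c_0\,s$, while $G/L$ embeds in $\Sym(m)$, so $|G|\le m!\,c_0\,s$. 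Since $|N|=s\,|N_v|\ge s$, this gives $|G/N|\le m!\,c_0$; as $\exp(T^k)=\exp(T)$, every $x\in G$ satisfies $x^{|G/N|}\in N$ and hence $x^{|G/N|\cdot\exp(T)}=1$, so $\exp(G)$ divides $|G/N|\,\exp(T)$ and in particular $\exp(G)\le m!\,c_0\,\exp(T)$. Now the positive solution of the restricted Burnside problem, applied to the $d_0$-generated group $G$ of this bounded exponent, yields $|G|\le g\big(d_0,\,m!\,c_0\,\exp(T)\big)$, whence $n\le|G|$ is bounded in terms of $m$ and $\exp(T)$.

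Next I would convert ``$\exp(T)$ large'' into ``semiregular element of large order''. Fix $m$ and let $n\to\infty$; by the bound just obtained, $\exp(T)\to\infty$. By the CFSG there are only finitely many finite simple groups of a given exponent, and only finitely many whose order has all prime divisors below a given bound (for a group of Lie type the defining characteristic is itself a prime divisor of the order); hence the largest prime divisor $P$ of $|T|$ tends to infinity together with $\exp(T)$ -- when $T\cong\mathbb Z_p$ this is trivial since $P=\exp(T)=p$. Pick $x\in N$ of order $P$, which exists by Cauchy's theorem. Once $n$ is large enough that $P>c_0\ge|N_w|$ for every vertex $w$, the cyclic group $\langle x\rangle$, being of prime order $P$, meets $N_w=N\cap G_w$ trivially for every $w$; equivalently no nontrivial power of $x$ fixes a vertex, so $x$ is a semiregular element of $G$ of order $P$. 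For the bounded range of small $n$ in which $P$ need not exceed $c_0$, one falls back on the Maru\v{s}i\v{c}--Scapellato theorem to get a semiregular element of order at least $2$; packaging the two cases produces a function $f$ with $\lim_{n\to\infty}f(n,m)=\infty$ for each fixed $m$.

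The main obstacle is really the first step: making the two boundedness inputs precise -- the absolute bound $|G_v|\le c_0$ for connected cubic vertex-transitive graphs (which forces $G$ to be boundedly generated and keeps each $|N_w|$ small) and the estimate $\exp(G)\le m!\,c_0\,\exp(T)$ (which keeps $\exp(G)$ bounded once $\exp(T)$ is) -- since together they are exactly what licenses the use of the restricted Burnside problem. The argument also uses the CFSG in an essential way, both via the Hall--Higman reduction hidden inside the restricted Burnside theorem and via the structural facts about finite simple groups of bounded exponent invoked in the third step.
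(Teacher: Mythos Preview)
Your argument rests on the assertion that there is an absolute constant $c_0$ with $|G_v|\le c_0$ for every connected cubic $G$-vertex-transitive graph. This is false. Tutte's bound $|G_v|\le 48$ applies only in the \emph{arc}-transitive case; when $G_v^{\Gamma_v}\cong\mathbb Z_2$ the kernel $G_v^{[1]}$ is a $2$-group (Lemma~\ref{stabiliserorders}) whose order admits no absolute bound. Explicit families of cubic vertex-transitive graphs with $|G_v|$ growing without bound are known (the split Praeger--Xu graphs; see the discussion in~\cite{census,PrSV}). Every step of your argument after the first paragraph uses $c_0$: you need it to bound the number of generators of $G$, you need it for the estimate $|L|\le c_0\,s$ that yields $|G/N|\le m!\,c_0$, and hence for $\exp(G)\le m!\,c_0\,\exp(T)$, which is what licenses the restricted Burnside problem. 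Without $c_0$ none of these inequalities hold and the appeal to Zel'manov's theorem is vacuous. (Your third use of $c_0$, to force semiregularity of an element of prime order $P>c_0$, happens to be repairable: by Lemma~\ref{stabiliserorders} any element of order coprime to $6$ is semiregular, so $P>3$ already suffices. But this alone does not rescue the proof.)

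The paper's proof circumvents the large-stabiliser obstacle by never attempting to bound $|G|$ or $\exp(G)$. Instead it works inside $N$: it builds a $3m$-generated subgroup $M\le N$ with the same orbits as $N$, so that $N=MN_\alpha$. In the elementary-abelian case this immediately bounds $|N|$, hence $n$, in terms of $p$ and $m$; in the nonabelian case $N\cong T^\ell$, a pigeon-hole argument (Lemma~\ref{lemma14}) together with the fact that each $N_\alpha$ is a $\{2,3\}$-group bounds $\ell$ in terms of $|T|$ and $m$, so $n$ is trapped between two functions of $|T|$ and $m$ (via Lemma~\ref{easy}). One then extracts from $T$ an element of large order coprime to $6$ using Lemma~\ref{simplegroups}, which is automatically semiregular. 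No bound on $|G_v|$ and no use of the restricted Burnside problem is required for Theorem~\ref{qubiqu}; the restricted Burnside problem enters only later, in the proof of Theorem~\ref{thrm}.
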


\subsection{Structure of the paper}The structure of this paper is straightforward. In Section~\ref{example}, we study a cubic graph related to the Burnside group $B(3,6)$, this should highlight once again the relationship between Conjecture~\ref{conj} and the restricted Burnside problem. In Section~\ref{reduction}, we collect some basic results on cubic vertex-transitive graphs and on non-abelian simple groups. In Section~\ref{proofs}, we first prove Theorem~\ref{qubiqu} and then
Theorem~\ref{thrm}. We prove Theorem~\ref{neg} in Section~\ref{sec:construction}.




\section{The group $B(3,6)$ and one interesting example}\label{example}

For positive integers $d$ and $e$, the Burnside group $B(d,e)$ is the freest group on $d$ generators with exponent $e$.  From~\cite{HH}, we see that $B(3,6)$ is actually finite and, in fact, $|B(3,6)|=2^{4375}3^{833}$. This gives that the largest group of exponent $6$ and generated by an involution and by an element of order $6$ is also finite. This group is denoted by $C(2,6)$ in~\cite{C26} and it is shown that $|C(2,6)|=2^{4}\cdot 3^7=34992$. Moreover, a presentation for $C(2,6)$ with two generators $a$ and $b$, where $a^2=b^6=1$, and sixteen relators is given in~\cite[Section~$5$, page~$3633$]{C26}.

Using the computer algebra system \texttt{magma}~\cite{magma}, we have constructed the Cayley graph $\Gamma=\Cay(C(2,6),\{a,b,b^{-1}\})$ and the automorphism group $\Aut(\Gamma)$. In particular, we have checked that $|\Aut(\Gamma)|=2\cdot|C(2,6)|$ and hence $\Aut(\Gamma)$ consists merely of the right regular translations by $C(2,6)$ and of the automorphism $\varphi$ of $C(2,6)$ fixing $a$ and mapping $b$ to $b^{-1}$. With another computation we see that $ab\varphi$ has order $12$ and is semiregular. So, the largest order of a semiregular element of $\Aut(\Gamma)$ is $12$. This gives that in Theorem~\ref{thrm} we have $f(34992)\leq 12$. So, although Theorem~\ref{thrm} shows that  $\lim_{n\to\infty}f(n)=\infty$, we see that the growth rate is conceivably rather slow. 

We are confident that the Burnside group $B(3,6)$ is a rich source of other exotic examples of this form. However, because of the computational complexity, we did not try to construct larger cubic Cayley graphs. 

Finally, we stress again that since we only prove the existence of a function $f$ satisfying Theorem~\ref{thrm}, we do not try (by any means) to obtain the best bounds in the arguments in the next sections.


\section{Some basic results}\label{reduction}
A graph $\Gamma$ is said to be $G$-\textit{arc-transitive} if $G$ is a subgroup of $\Aut(\Gamma)$ acting transitively on the arcs of $\Gamma$, that is, on the ordered pairs of adjacent vertices of $\Gamma$.

Given a vertex $\alpha$ of $\Gamma$, we let $G_\alpha$ be the vertex-stabiliser of $\alpha$ and we let $G_\alpha^{[1]}$ be the pointwise stabiliser of the neighbourhood $\Gamma_\alpha$ of $\alpha$. In particular, $G_\alpha/G_\alpha^{[1]}$ is isomorphic to the permutation group $G_\alpha^{\Gamma_\alpha}$ induced by the action of $G_\alpha$ on $\Gamma_\alpha$. Moreover, for a subgroup $H$ of $G$ we write $\alpha^H$ for the $H$-orbit containing $\alpha$, that is, $\alpha^H=\{\alpha^h\mid h\in H\}$. Finally, given another  vertex $\beta$ of $\Gamma$, we denote by $d(\alpha,\beta)$ the length of a shortest path from $\alpha$ to $\beta$.

 We start our analysis with some preliminary remarks.
\begin{lemma}\label{stabiliserorders}Let $\Gamma$ be a connected cubic $G$-vertex-transitive graph and let $\alpha$ be a vertex of $\Gamma$. Then $G_\alpha^{[1]}$ is a $2$-group. Moreover, either $G_\alpha$ is a $2$-group, or $\Gamma$ is $G$-arc-transitive and $|G_\alpha|=3\cdot 2^s$, for some $s\geq 0$. In particular, every element of $G$ of order coprime to $2$ and $3$ is a semiregular element.
\end{lemma}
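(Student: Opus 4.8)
The plan is to exploit the fact that $\Gamma$ is cubic, so each neighbourhood $\Gamma_\alpha$ has exactly three vertices and $G_\alpha$ acts on it with $G_\alpha/G_\alpha^{[1]}\cong G_\alpha^{\Gamma_\alpha}\le\Sym(\Gamma_\alpha)$, a group of order dividing $6$. Everything then reduces to two ingredients: showing that the kernel $G_\alpha^{[1]}$ is a $2$-group, and doing elementary order arithmetic on $|G_\alpha|=|G_\alpha^{[1]}|\cdot|G_\alpha^{\Gamma_\alpha}|$.

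For the first ingredient I would argue by propagation through the graph. Suppose $g\in G_\alpha^{[1]}$ has odd order and let $\Delta$ be the set of vertices fixed by $g$; by hypothesis $\alpha\in\Delta$ and $\Gamma_\alpha\subseteq\Delta$. The key observation is that whenever a vertex $\beta\in\Delta$ has some neighbour $\delta\in\Delta$, the element $g$ fixes $\beta$ and $\delta$ and hence permutes the remaining two neighbours of $\beta$; since $\langle g\rangle$ has odd order and acts on a $2$-element set, it fixes those two neighbours as well, so all of $\Gamma_\beta$ lies in $\Delta$. Applying this first to $\alpha$, then to each $\beta\in\Gamma_\alpha$ (which has the neighbour $\alpha\in\Delta$), and then iterating — each newly reached fixed vertex having been reached along an edge from a fixed vertex — connectivity of $\Gamma$ forces $\Delta=V\Gamma$, so $g=1$. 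Hence $G_\alpha^{[1]}$ contains no nontrivial element of odd order, and by Cauchy's theorem it is a $2$-group.

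For the second ingredient, if $3\nmid|G_\alpha^{\Gamma_\alpha}|$ then $|G_\alpha^{\Gamma_\alpha}|\in\{1,2\}$ and, combined with the previous paragraph, $G_\alpha$ is a $2$-group. Otherwise $G_\alpha^{\Gamma_\alpha}$ is $C_3$ or $\Sym(\Gamma_\alpha)$, both transitive on $\Gamma_\alpha$, so $G_\alpha$ is transitive on $\Gamma_\alpha$; together with the $G$-vertex-transitivity this gives that $\Gamma$ is $G$-arc-transitive, and $|G_\alpha|=|G_\alpha^{[1]}|\cdot|G_\alpha^{\Gamma_\alpha}|=3\cdot 2^s$ for some $s\ge 0$. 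Finally, for the last assertion let $g\in G$ have order coprime to $6$ and suppose $g^k\ne 1$ fixes some vertex $\beta$; then $g^k\in G_\beta$ has order coprime to $6$, yet $|G_\beta|$ is a power of $2$ or equals $3\cdot 2^s$, so Lagrange forces $g^k=1$, a contradiction — hence $g$ is semiregular.

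The only genuinely substantive step is showing $G_\alpha^{[1]}$ is a $2$-group, and within it the delicate point is the bookkeeping of the propagation argument: one must be careful that the induction is set up so that every vertex placed in $\Delta$ arrives with a neighbour already in $\Delta$, which is exactly what licenses using the odd order of $g$ to fix its remaining neighbours. The rest is routine order arithmetic.
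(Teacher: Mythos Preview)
Your proof is correct and is essentially the same as the paper's. The only cosmetic difference is that the paper phrases the argument for $G_\alpha^{[1]}$ being a $2$-group as a minimal-distance contradiction (take $g$ of prime order $p>2$, pick $\gamma$ at minimal distance with $\gamma^g\neq\gamma$, and observe that $g$ must then fix $\Gamma_{\alpha_{d-1}}$ pointwise), whereas you phrase it as a forward propagation/closure argument on the fixed-point set; both rest on the identical observation that an odd-order automorphism fixing a vertex and one of its neighbours in a cubic graph must fix the remaining two neighbours.
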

\begin{proof}
We start by showing that $G_\alpha^{[1]}$ is a $2$-group. We argue by contradiction and we let $g$ be an element of $G_\alpha^{[1]}$ of prime order $p>2$. Let $\gamma$ be a vertex of $\Gamma$ at minimal distance from $\alpha$ with $\gamma^g\neq \gamma$. Write $d=(\alpha,\gamma)$. As $g$ fixes pointwise $\Gamma_\alpha$, we must have $d\geq 2$. Let $\alpha=\alpha_0,\ldots,\alpha_{d-1},\alpha_d=\gamma$ be a path of length $d$ from $\alpha$ to $\gamma$ in $\Gamma$. By minimality, $\alpha_{d-1}^g=\alpha_{d-1}$ and hence $g$ induces a permutation of the neighbourhood $\Gamma_{\alpha_{d-1}}$ fixing $\alpha_{d-2}$. Since $\Gamma$ is cubic and since  $|g|>2$, we see that $g$ fixes pointwise $\Gamma_{\alpha_{d-1}}$. However, this contradicts $\gamma^g\neq \gamma$.  

Now, $G_\alpha/G_\alpha^{[1]}$ is isomorphic to a subgroup of $\Sym(\Gamma_\alpha)$. As $|\Sym(\Gamma_\alpha)|=6$, the rest of the proof follows easily.
\end{proof}

\begin{lemma}\label{3gen}Let $\Gamma$ be a connected cubic $G$-vertex-transitive graph. Then $G$ contains a $3$-generated subgroup transitive on $V\Gamma$.  Moreover, if $\Gamma$ is also $G$-arc-transitive, then $G$ contains a $6$-generated arc-transitive subgroup. 
\end{lemma}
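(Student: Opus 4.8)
The plan is to exhibit the required subgroup explicitly, by choosing one ``edge-moving'' element of $G$ for each edge at a fixed base vertex, and then to check transitivity by a one-line propagation argument along the edges of the connected graph $\Gamma$.

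Fix a vertex $\alpha$ of $\Gamma$ and let $\Gamma_\alpha=\{\beta_1,\beta_2,\beta_3\}$ be its neighbourhood; these three vertices are pairwise distinct because $\Gamma$ is cubic and has no loops or multiple edges. For the first assertion I would use the vertex-transitivity of $G$ to choose, for each $i\in\{1,2,3\}$, an element $g_i\in G$ with $\alpha^{g_i}=\beta_i$, and set $H=\langle g_1,g_2,g_3\rangle$. The key point is that $\alpha^H$ is a union of connected components of $\Gamma$: if $\delta=\alpha^h\in\alpha^H$ with $h\in H$, then, as $h\in H\le G\le\Aut(\Gamma)$, the three neighbours of $\delta$ are exactly $\beta_1^h,\beta_2^h,\beta_3^h$, and $\beta_i^h=\alpha^{g_ih}$ with $g_ih\in H$; hence all three neighbours of $\delta$ lie in $\alpha^H$. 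Since $\Gamma$ is connected this forces $\alpha^H=V\Gamma$, so $H$ is a $3$-generated transitive subgroup of $G$.

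For the second assertion, assume in addition that $\Gamma$ is $G$-arc-transitive. Then $G_\alpha$ is transitive on $\Gamma_\alpha$, so the permutation group induced by $G_\alpha$ on $\Gamma_\alpha$ is a transitive subgroup of $\Sym(\Gamma_\alpha)$; being transitive of degree $3$, it contains a permutation cyclically permuting $\beta_1,\beta_2,\beta_3$. I would pick $c\in G_\alpha$ inducing such a $3$-cycle, say $\beta_1\mapsto\beta_2\mapsto\beta_3\mapsto\beta_1$, together with an element $g\in G$ with $\alpha^g=\beta_1$, and set $H=\langle g,c\rangle$; this is $2$-generated, hence certainly at most $6$-generated. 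Now $\beta_i=\alpha^{gc^{i-1}}\in\alpha^H$ for $i\in\{1,2,3\}$, and for every $x\in H$ the neighbourhood of $\alpha^x$ equals $\{\alpha^{gc^{i-1}x}\mid i=1,2,3\}\subseteq\alpha^H$; so, exactly as before, connectedness of $\Gamma$ gives $\alpha^H=V\Gamma$ and $H$ is vertex-transitive. Finally $c\in H_\alpha$ and $\langle c\rangle$ is already transitive on $\Gamma_\alpha$, so $H$ is transitive on the arcs of $\Gamma$: an arbitrary arc $(\gamma,\delta)$ is carried to an arc at $\alpha$ by a suitable element of $H$ and then onto $(\alpha,\beta_1)$ by a suitable power of $c$. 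Hence $H$ is an (at most) $6$-generated arc-transitive subgroup, completing the proof.

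I do not expect a genuine obstacle here; the one point that needs a little care is the verification that $\alpha^H$ is closed under passing to neighbours, and this is precisely why the generating set must contain, for every neighbour $\beta$ of $\alpha$, some element of $H$ carrying $\alpha$ to $\beta$ (in the arc-transitive case this role is played by the words $gc^{i-1}$). Everything else is routine, and the bounds $3$ and $6$ in the statement are deliberately non-optimal: the argument above in fact yields a $2$-generated arc-transitive subgroup.
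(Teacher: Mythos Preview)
Your proof of the first assertion is essentially identical to the paper's: both choose one element of $G$ carrying $\alpha$ to each of its three neighbours and verify that $\alpha^H$ is closed under taking neighbours, whence connectedness forces $\alpha^H=V\Gamma$.

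For the second assertion you take a genuinely sharper route. The paper adjoins, in addition to the three elements $g_\beta$, three elements $x_\beta\in G_\alpha$ witnessing transitivity of $G_\alpha$ on $\Gamma_\alpha$, obtaining a $6$-generated arc-transitive subgroup $K$; it then remarks that Tutte's theorem would bring this down to $3$. You instead exploit the elementary fact that a transitive subgroup of $\Sym(3)$ must contain a $3$-cycle, pick a single element $c\in G_\alpha$ acting as such a $3$-cycle on $\Gamma_\alpha$, and let the words $gc^{i-1}$ play the role of the $g_{\beta_i}$. The propagation argument is then literally the same, and $\langle c\rangle\le H_\alpha$ already acts transitively on $\Gamma_\alpha$, so you obtain a $2$-generated arc-transitive subgroup without invoking Tutte. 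Both arguments are correct; yours is more economical and yields a strictly stronger conclusion, while the paper's version deliberately avoids any optimisation (as it notes explicitly, only existence is needed downstream).
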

\begin{proof}
We prove the two parts of this lemma simultaneously.
Fix $\alpha$ a vertex of $\Gamma$. For each $\beta\in \Gamma_\alpha$, choose  $g_\beta\in G$ with $\alpha^{g_\beta}=\beta$. Moreover, if $G$ is arc-transitive, then fix $\beta_0\in \Gamma_\alpha$ and, for each $\beta\in \Gamma_\alpha$, choose $x_\beta\in G_\alpha$ with $\beta_0^{x_\beta}=\beta$. Write $H=\langle g_\beta\mid \beta\in \Gamma_\alpha\rangle$ and, if $G$ is arc-transitive, write also  $K=\langle g_\beta,x_\beta\mid\beta\in \Gamma_\alpha\rangle$. Clearly, $H$ is $3$-generated  and $K$ is $6$-generated . 

Write $\Delta=\alpha^H$. Let $\delta\in \Delta$. So, $\delta=\alpha^h$, for some $h\in H$. As $\Gamma_\alpha\subseteq \Delta$, we get $\Gamma_\delta=\Gamma_{\alpha^h}=\Gamma_\alpha^h\subseteq \Delta^h=\Delta$. Since $\delta$ is an arbitrary element of $\Delta$, this shows that the subgraph induced by $\Gamma$ on $\Delta$ is cubic. As $\Gamma$ is connected, we must have $\Delta=V\Gamma$ and hence $H$ is transitive on  $V\Gamma$. By construction $K_\alpha$ is transitive on $\Gamma_\alpha$ and hence $K$ is arc-transitive.
\end{proof}

Using a celebrated theorem of Tutte~\cite{tutte1} one can (if minded so) replace $6$ by $3$ in the statement of Lemma~\ref{3gen}.
We conclude this section with three lemmas on non-abelian simple groups.

\begin{lemma}\label{easy}There exists a function $\nu:\mathbb{N}\to \mathbb{N}$ with $\lim_{n\to\infty}\nu(n)=\infty$ such that, if $T$ is a non-abelian simple group of order $n$ and $R$ is a $\{2,3\}$-subgroup of $T$, then $|T:R|\geq \nu(n)$.
\end{lemma}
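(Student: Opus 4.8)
The plan is to argue ``softly'': for this particular statement one needs neither a quantitative estimate nor the classification of finite simple groups. The key observation is that a $\{2,3\}$-subgroup yields a small faithful permutation representation. First I would note that, by Burnside's $p^aq^b$-theorem, the order of a non-abelian simple group is divisible by a prime other than $2$ and $3$, so a $\{2,3\}$-subgroup $R$ of $T$ is always a proper subgroup. Hence the action of $T$ by right multiplication on the set of right cosets of $R$ in $T$ is a transitive permutation representation of degree $|T:R|$, and it is faithful, since its kernel is a normal subgroup of $T$ contained in the proper subgroup $R$ and $T$ is simple; in particular $T$ is isomorphic to a subgroup of a symmetric group of degree $|T:R|$.

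Next I would fix $k\in\mathbb N$ and observe that only finitely many non-abelian simple groups $T$ possess a $\{2,3\}$-subgroup of index at most $k$: each such $T$ embeds into the symmetric group of degree $k$, and a finite group has only finitely many subgroups. Consequently
\[
\beta(k):=\max\bigl\{\,|T| : T\text{ is non-abelian simple and has a }\{2,3\}\text{-subgroup of index at most }k\,\bigr\}
\]
is a well-defined natural number (set $\beta(k)=1$ if there is no such $T$). Clearly $\beta$ is non-decreasing, and it is unbounded, because for any non-abelian simple group $T_0$ the trivial subgroup is a $\{2,3\}$-subgroup of index $|T_0|$, so $\beta(|T_0|)\ge|T_0|$ while $|T_0|$ may be arbitrarily large. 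Hence $\{k\in\mathbb N:\beta(k)<n\}$ is finite for every $n$, and one may define $\nu:\mathbb N\to\mathbb N$ by $\nu(n):=1+\max\{k\in\mathbb N:\beta(k)<n\}$, with the convention $\max\emptyset=0$.

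It then remains to verify the two stated properties. If $T$ is a non-abelian simple group of order $n$ and $R$ is a $\{2,3\}$-subgroup of $T$ with $|T:R|<\nu(n)$, then $|T:R|\le k$ for some $k$ with $\beta(k)<n$, whence $n=|T|\le\beta(k)<n$, a contradiction; so $|T:R|\ge\nu(n)$. And $\lim_{n\to\infty}\nu(n)=\infty$, since for any $K$ every $n>\beta(K)$ satisfies $K\in\{k:\beta(k)<n\}$ and so $\nu(n)\ge K+1$. I do not anticipate a real obstacle here: the argument is elementary. The one point that needs a little care is that $\nu$ must depend on the order $n$ alone and not on the isomorphism type of $T$ — a genuine constraint, since non-isomorphic simple groups of equal order exist, for instance $\PSL(4,2)\cong\Alt(8)$ and $\PSL(3,4)$, both of order $20160$ — and this is precisely why $\beta(k)$ is defined as a maximum over \emph{all} the finitely many relevant simple groups. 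If one wanted an explicit $\nu$ one could instead invoke the classification to bound the order of a $\{2,3\}$-subgroup of $T$ by a fixed power of $|T|$ (for a group of Lie type in characteristic $p\ge 5$ one simply discards the $p$-part of $|T|$; the other families are handled case by case), but this refinement is unnecessary for the qualitative statement.
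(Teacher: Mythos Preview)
Your proof is correct and uses essentially the same idea as the paper: by Burnside's $p^aq^b$-theorem a $\{2,3\}$-subgroup $R$ is proper, so simplicity makes the coset action faithful and $|T|\le|T:R|!$. The only difference is packaging: the paper argues by contradiction (an infinite family with bounded index would have bounded order), whereas you invert the same inequality into an explicit construction of $\nu$ via the auxiliary function $\beta$; your remark about simple groups of equal order is a nice touch but not needed for either version.
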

\begin{proof}
We argue by contradiction and we assume that there exists no such function $\nu$. This means that there exist 
\begin{description}
\item[(i)] a positive integer $c$,
\item[(ii)] an infinite family of non-abelian simple groups $\{T_m\}_{m\in\mathbb{N}}$ and 
\item[(iii)]a family $\{R_m\}_{m\in\mathbb{N}}$ of $\{2,3\}$-groups
\end{description}
with $|T_m|<|T_{m+1}|$, $R_m\leq T_m$ and $|T_m:R_m|\leq c$, for every $m\in \mathbb{N}$.

Observe that by the Burnside's $p^\alpha q^\beta$-theorem,  $R_m$ is a proper subgroup of $T_m$. Let $K_m$ be the core of $R_m$ in $T_m$. Since $T_m$ is a non-abelian simple group, we have $K_m=1$. Clearly, $|T_m|=|T_m:K_m|\leq |T_m:R_m|!\leq c!$. However this contradicts $\lim_{m\to\infty}|T_m|=\infty$.
\end{proof}

Given two integers $x$ and $f$ with $x\geq 2$ and $f\geq 1$, a prime $r$ is said to be a {\em primitive prime divisor} for $x^f-1$ if $r$ divides $x^f-1$ and if $r$ is coprime to $x^s-1$, for each $1\leq s<f$. 
\begin{lemma}\label{numbertheory}
Let $x$ and $f$ be integers with $x\geq 2$ and $f\geq 1$. Then $x^f-1$ has no primitive prime divisors if and only if either $(x,f)=(2,6)$ or $(x,f)=(2^y-1,2)$ for some $y\in\mathbb{N}$. If $r$ is a primitive prime divisor for $x^f-1$, then $r\geq f+1$.
\end{lemma}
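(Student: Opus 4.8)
The plan is to reduce the classification to the behaviour of cyclotomic polynomials, which is the standard route to the Bang--Zsygmondy theorem. I would first dispose of the last sentence, which is elementary: if $r$ is a primitive prime divisor of $x^{f}-1$, then $x$ has multiplicative order exactly $f$ modulo $r$ (it divides $x^{f}-1$ but no $x^{s}-1$ with $1\leq s<f$), and since this order divides $r-1$ by Fermat's little theorem, $f\mid r-1$ and hence $r\geq f+1$.

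For the first sentence I would use the factorisation $x^{f}-1=\prod_{d\mid f}\Phi_{d}(x)$, where $\Phi_{d}$ denotes the $d$-th cyclotomic polynomial, together with the classical description of the prime divisors of $\Phi_{f}(x)$: if a prime $r$ divides $\Phi_{f}(x)$ and $d$ denotes the multiplicative order of $x$ modulo $r$, then $f=d\,r^{k}$ for some $k\geq 0$; when $k=0$ this $r$ is a primitive prime divisor of $x^{f}-1$, and when $k\geq 1$ one has $r\mid f$, $r$ is the largest prime divisor of $f$, and $r$ divides $\Phi_{f}(x)$ exactly once --- with the sole exception $f=2$, $r=2$, where $\Phi_{2}(x)=x+1$ may be divisible by a higher power of $2$. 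Granting this, and assuming $f\geq 2$ (the case $f=1$ being trivial), if $x^{f}-1$ has no primitive prime divisor then every prime dividing $\Phi_{f}(x)$ lies in the case $k\geq 1$; as all such primes must be the largest prime divisor of $f$, there is exactly one of them, say $r$, and --- away from $f=2$ --- we get $\Phi_{f}(x)=r\leq f$.

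Next I would bring in a lower bound for $\Phi_{f}(x)$: writing $\Phi_{f}(x)=\prod_{\zeta}(x-\zeta)$ over the primitive $f$-th roots of unity, each factor has absolute value at least $x-1$, so $\Phi_{f}(x)\geq(x-1)^{\varphi(f)}$. For $x\geq 3$ this forces $2^{\varphi(f)}\leq f$, which holds for only a few values of $f$; for $x=2$ I would instead use $|x-\zeta|^{2}\geq x^{2}-2x\cos(2\pi/f)+1$ to show $\Phi_{f}(2)>f$ once $f\geq 7$, and treat $f\leq 6$ by hand (the only offender being $\Phi_{6}(2)=3$). This leaves a short, explicit list of pairs $(x,f)$, and inspecting it shows that a primitive prime divisor of $x^{f}-1$ fails to exist exactly when $\Phi_{f}(x)$ is a prime dividing $f$: for $f=2$ this means $x+1$ is a power of $2$, i.e. $x=2^{y}-1$; for $f=6$ it means $x^{2}-x+1=3$, i.e. $x=2$; and no other value of $f$ contributes, giving precisely the two families in the statement.

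The main obstacle is the cyclotomic-polynomial lemma, and in particular the $2$-adic bookkeeping inside it: proving that $r$ divides $\Phi_{f}(x)$ exactly once when $r\mid f$ is a lifting-the-exponent computation whose only genuine failure is the exception at $r=f=2$, and that exception must be handled with care precisely because it is responsible for the $(2^{y}-1,2)$ family. Once that lemma is established, the remaining steps are a finite check made clean by the lower bound on $\Phi_{f}(x)$, and I expect no further difficulty there beyond the arithmetic.
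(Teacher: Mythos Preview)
Your argument for the inequality $r\geq f+1$ is exactly the paper's: the multiplicative order of $x$ modulo $r$ is $f$, Fermat's little theorem gives $f\mid r-1$, and hence $r\geq f+1$.

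For the classification, however, the paper proves nothing at all: it simply cites Zsigmondy's 1892 paper and moves on. Your proposal is therefore far more ambitious --- you are sketching a complete proof of the Bang--Zsigmondy theorem via cyclotomic polynomials. The sketch is sound: the lemma on prime divisors of $\Phi_f(x)$ (including the observation that a non-primitive prime divisor $r$ must be the largest prime factor of $f$, which follows because $f=d\,r^k$ with $d\mid r-1$ forces every prime factor of $f$ to be at most $r$), the lifting-the-exponent step giving $r\parallel\Phi_f(x)$ for odd $r\mid f$, the lower bound $\Phi_f(x)\geq(x-1)^{\varphi(f)}$, and the resulting finite check are all standard and correctly assembled. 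What your route buys is self-containment; what the paper's route buys is brevity, since Zsigmondy's theorem is a named classical result and the paper has no reason to reprove it for a lemma that is used only once, inside the proof of Lemma~\ref{simplegroups}.
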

\begin{proof}
The first part follows from~\cite{zg}. Now, let $r$ be a primitive prime divisor for $x^f-1$. A computation shows that $r$ divides $x^m-1$ if and only if $m$ is a multiple of $f$. As $r$ divides $x^{r-1}-1$ by Fermat's little theorem, we deduce that $f$ divides $r-1$. Thus $r-1\geq f$ and $r\geq f+1$.
\end{proof}

For  the proof of the following lemma we recall the definition of Lie rank of a simple group of Lie type $G$, here we follow~\cite[Sections~$13.1$,~$13.2$]{carter}. Now, $G$  is a group with a $(B,N)$-pair and the Weyl group W of this $(B,N)$-pair is a reflection group  generated by a set of simple reflections. The number of simple reflections is defined to be the Lie rank of $G$. So, for instance, $\mathrm{PSU}_3(q)$ has Lie rank $1$. (The order of the Weyl group $W$ for untwisted groups is given in~\cite[Section~$3.6$]{carter} and for twisted groups is given in~\cite[Section~$13.3$]{carter}.)

\begin{lemma}\label{simplegroups}
There exists a function $\mu:\mathbb{N}\to \mathbb{N}$ with $\lim_{n\to\infty}\mu(n)=\infty$ such that, if $T$ is a non-abelian simple group of order $n$, then $T$ contains an element $t$ of order coprime to $2$ and $3$ and with $|t|\geq \mu(n)$.
\end{lemma}
\begin{proof}
We argue by contradiction and we assume that there exists no such function $\mu$. This means that there exist a constant $c$ and an infinite family of non-abelian simple groups $\{T_m\}_{m\in \mathbb{N}}$ such that, $|T_m|<|T_{m+1}|$ and every element of order coprime to $2$ and $3$ in $T_m$ has order at most $c$, for every $m\in \mathbb{N}$.

Suppose that $\{T_m\}_{m\in \mathbb{N}}$ contains a subsequence $\{T_{m_s}\}_{s\in \mathbb{N}}$ such that, for every $s\in \mathbb{N}$, the group $T_{m_s}$ is isomorphic either to an alternating group of degree $d_s$ or to a group of Lie type of Lie rank $d_s$, and  $\lim_{s\to\infty}d_s=\infty$. 

 Let $s_0\in \mathbb{N}$ with $d_s\geq 5$, for every $s\geq s_0$. Now, we see that $d_s!/2$ divides the order of $T_{m_s}$ (if $T_{m_s}\cong \Alt(d_s)$, then this is clear, and if $T_{m_s}$ is a group of Lie type of Lie rank $d_s$, then $d_s!/2$ divides the order of the  Weyl group of $T_{m_s}$ and hence the order of $T_{m_s}$). From Bertrand's postulate, there exists a prime $p_s$ with $d_s/2< p_s\leq d_s$. Observe that $p_s\notin\{2,3\}$ for every $s\geq s_0$.  In particular, for every $s\geq s_0$, the group $T_{m_s}$ contains an element of order $p_s$  coprime to $2$ and $3$. As $\lim_{s\to\infty}p_s=\lim_{s\to\infty}d_s=\infty$, we contradict our choice of $\{T_m\}_{m\in\mathbb{N}}$. This shows that there exists a constant $r$ such that, for every $m\in \mathbb{N}$, the group $T_m$ is either a sporadic simple group, or an alternating group of degree $\leq r$, or a group of Lie type of Lie rank $\leq r$. In particular, since there are only finitely many sporadic simple groups and alternating groups in the family $\{T_{m}\}_{m\in \mathbb{N}}$, by replacing $\{T_{m}\}_{m\in\mathbb{N}}$ with a proper infinite subfamily if necessary, we may assume that every element in $\{T_m\}_{m\in \mathbb{N}}$ is a group of Lie type of Lie rank $\leq r$. Moreover, since there is only a finite number of possible ranks for the groups in $\{T_m\}_{m\in\mathbb{N}}$,  there exists a positive integer $l$ such that, by replacing $\{T_{m}\}_{m\in\mathbb{N}}$ with a proper infinite subfamily if necessary, every element in $\{T_m\}_{m\in \mathbb{N}}$ is a group of Lie type of Lie rank $l$.

Suppose that $\{T_m\}_{m\in \mathbb{N}}$ contains a subsequence $\{T_{m_s}\}_{s\in \mathbb{N}}$ such that, for every $s\in \mathbb{N}$, the group $T_{m_s}$ is a group of Lie type in characteristic $p_s$, and  $\lim_{s\to\infty}p_s=\infty$. Since $T_{m_s}$ contains an element of order $p_s$, we contradict our choice of $\{T_m\}_{m\in \mathbb{N}}$. This shows that there exists a constant $r$ such that, for every $m\in \mathbb{N}$, the group $T_m$ is a group of Lie type in characteristic $p$ with $p\leq r$. Since there are only finitely many primes $\leq r$, arguing as above, we may assume that every element in $\{T_m\}_{m\in \mathbb{N}}$ is a group of Lie type in characteristic $p$, for a fixed prime $p$.

Observe that there is only a finite number of Lie types. 
So, as usual, we may assume that every element in $\{T_m\}_{m\in \mathbb{N}}$ is of the same Lie type.

Summing up, for every $m\in \mathbb{N}$, we  have $T_m=L_l(p^{f_m})$, for some $f_m\in \mathbb{N}$ (where we denote by $L_l(p^f)$ the group of Lie type $L$, of Lie rank $l$, in characteristic $p$ and  defined over the field $p^f$). Let $m_0\in \mathbb{N}$ such that $f_m\geq 7$, for every $m\geq m_0$.

For $m\in \mathbb{N}$, let $e_m$ be the order of the Schur multiplier of $T_m$ (for each group of Lie type the order of the Schur multiplier can be found in~\cite[Table~$6$, page~xvi]{ATLAS}). Observe that $e_m$ is bounded above by a function of $l$. Now, from~\cite[Table~$6$, page~xvi]{ATLAS}, we see that $(p^{f_m}-1)/e_m$ divides $|T_m|$. For $m\geq m_0$, by Lemma~\ref{numbertheory} there exists a primitive prime divisor $r_m$ for $p^{f_m}-1$ and, moreover, $r_m\geq f_m+1$. As $T_m$ contains an element of order $r_m$ and as $\lim_{m\to\infty}r_m=\lim_{m\to\infty}f_m=\infty$, we obtain a final contradiction.  
\end{proof}

Lemma~\ref{lemma14} is inspired by~\cite[Lemma~$14$]{PrSV} and part of its proof is taken directly from~\cite[Section~$3$]{PrSV} (unfortunately we were unable to readily deduce the statement of Lemma~\ref{lemma14} from~\cite[Lemma~$14$]{PrSV}).
\begin{lemma}\label{lemma14}
Let $T$ be a non-abelian simple group, let $\ell\geq 1$, let $\ell'$ be a divisor of $\ell$, let $S_1,\ldots,S_{\ell'}$ be proper subgroups of $T$
 and let $M$ be an $m$-generated subgroup of $T^\ell$. Suppose that $$T^\ell=M\left((S_1)^{\frac{\ell}{\ell'}}\times \cdots \times (S_{\ell'})^{\frac{\ell}{\ell'}}\right).$$ Then $(\ell/\ell')\leq |T|^m$.
\end{lemma}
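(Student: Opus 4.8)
The plan is to reduce the statement to a counting argument on the cosets of the product subgroup $P := (S_1)^{\ell/\ell'}\times\cdots\times(S_{\ell'})^{\ell/\ell'}$ inside $T^\ell$, exploiting that $M$ is $m$-generated. First I would set $q:=\ell/\ell'$, so that $P$ is a product of $\ell'$ blocks, each block being $S_i^{q}\le T^{q}$, and the hypothesis $T^\ell = MP$ says that $M$ acts transitively on the (right) coset space $[T^\ell:P]$. The number of these cosets is $\prod_{i=1}^{\ell'}|T:S_i|^{q}\ge 2^{\ell' q}=2^{\ell}$ since each $S_i$ is a proper subgroup of the non-abelian simple group $T$ (indeed $|T:S_i|\ge 2$, and by Burnside's $p^\alpha q^\beta$-theorem one cannot have $S_i$ of index $2$ without more, but index $\ge 2$ suffices here, or more generously index $\ge 5$). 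Transitivity of $M$ on this set gives a homomorphism $M\to\Sym([T^\ell:P])$ with image a transitive subgroup; I do not want to bound $|T^\ell:P|$ by a factorial, though, since that is far too weak — the point is to use the generator count.

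The key step is the following observation about $m$-generated groups acting transitively. If a group $M=\langle y_1,\dots,y_m\rangle$ acts transitively on a finite set $\Omega$, and $x_0\in\Omega$ with point stabiliser $M_{x_0}$, then $|\Omega|=|M:M_{x_0}|$, which by itself gives nothing. Instead I would project: for each of the $\ell'$ coordinate blocks, composing the transitive action of $M$ on $[T^\ell:P]$ with the block projection $T^\ell\to T^{q}$ and then the quotient $T^{q}\to[T^{q}:S_i^{q}]$, we get that the image $M_i$ of $M$ in the wreath-type action on $[T^{q}:S_i^{q}]$ is transitive. Now $[T^q:S_i^q]$ can still be large (it is $|T:S_i|^q$), so a further reduction is needed: within the single block $T^q$, transitivity of an $m$-generated group on $[T^q:S_i^q]$ forces — via the same kind of argument applied one more level down, to the $q$ copies of $[T:S_i]$ — a bound on $q$. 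Concretely, an $m$-generated subgroup of $T^q$ surjecting onto each of the $q$ factors $T/S_i$, i.e.\ acting transitively on $[T:S_i]^{\,q}$ diagonally, cannot have $q$ too large relative to $|T|$ and $m$: the clean bound is that the number of such diagonal "patterns" realizable by $m$ generators is at most $|T|^{m}$, whence $q\le|T:S_i|^{\,q}$ is not what we want but rather $q\le |T|^{m}$.

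To make that last paragraph precise, here is the argument I would actually write. Fix one block, say the first, so we must bound $q=\ell/\ell'$ using that (the first-block image of) $M$ is transitive on $[T^q:S_1^q]$. A point of $[T^q:S_1^q]$ is a $q$-tuple $(S_1 t_1,\dots,S_1 t_q)$. Since $M$ is $m$-generated, write $M=\langle z_1,\dots,z_m\rangle$ where each $z_j\in T^q$ is a $q$-tuple $z_j=(z_{j,1},\dots,z_{j,q})$ with $z_{j,k}\in T$. The element $z_j$ of $M$ is completely determined, as far as its action on $[T^q:S_1^q]$ goes, by the $q$-tuple of cosets-to-cosets data $(S_1 z_{j,1}^{\pm}, \dots)$ — but more to the point, two indices $k\ne k'$ in $\{1,\dots,q\}$ for which $(z_{1,k},\dots,z_{m,k})$ and $(z_{1,k'},\dots,z_{m,k'})$ lie in the same coset of the diagonal contribute identically, which would prevent transitivity on the product unless those coordinates are "independent". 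The number of distinct possible $m$-tuples $(z_{1,k},\dots,z_{m,k})$ modulo the relevant equivalence is at most $|T|^{m}$; if $q>|T|^{m}$ then by pigeonhole two coordinates $k\ne k'$ receive identical data from all generators, so the image of $M$ in $[T:S_1]\times[T:S_1]$ (the $k$-th and $k'$-th factors) is contained in the diagonal and hence not transitive on the product $[T:S_1]^2$ — contradicting transitivity on the full $[T^q:S_1^q]$. Therefore $q\le|T|^{m}$, which is exactly $(\ell/\ell')\le|T|^m$.

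The main obstacle I anticipate is pinning down the precise equivalence relation in that pigeonhole step so that "identical generator data on two coordinates" genuinely obstructs transitivity. The subtlety is that $M$ may act on the coordinate set $\{1,\dots,q\}$ (it is a subgroup of $T\wr\cdot$ only if we allow coordinate permutations, but here $M\le T^q$ has trivial action on coordinates), so in fact $M$ fixes each coordinate and the argument is cleaner than feared: if coordinates $k,k'$ satisfy $z_{j,k}=z_{j,k'}$ for all $j$ — or even just $z_{j,k}S_1 = z_{j,k'}S_1$ compatibly — then the image in the $(k,k')$-product factor lies in $\{(S_1u,S_1u)\}$, a proper subset, so $M$ is not transitive there, contradiction; hence the $q$ tuples $(z_{1,k},\dots,z_{m,k})\in T^m$ give at most... and here one must be slightly careful that it is $|T|^m$ and not $|T:S_1|^m$ that bounds the count, which is fine since we may as well take the full tuples in $T^m$. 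This gives the bound cleanly, and the constant is the stated $|T|^m$.
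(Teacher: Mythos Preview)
Your proposal is correct and follows essentially the same approach as the paper: reduce to a single block (so $\ell'=1$), apply the pigeonhole principle to the $q$ coordinate-tuples $(z_{1,k},\ldots,z_{m,k})\in T^m$ of the $m$ generators to find two coordinates $k\neq k'$ with identical data when $q>|T|^m$, and then derive a contradiction from the fact that the projection of $M$ onto those two coordinates lies in the diagonal $D\le T^2$. The only cosmetic difference is that the paper finishes by explicitly exhibiting an element $(1,t)$ with $t\notin S_1$ that cannot lie in $D(S_1\times S_1)$, whereas you phrase the same obstruction as ``$D$ is not transitive on $(T/S_1)^2$''; these are the same observation.
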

\begin{proof}
For $j\in \{1,\ldots,\ell\}$, denote by $T_j$ the $j$th direct factor of $T^\ell$.  Moreover, for each $i\in \{1,\ldots,\ell'\}$, write $$N_i=T_{\frac{(i-1)\ell}{\ell'}+1}\times T_{\frac{(i-1)\ell}{\ell'}+2}\times \cdots \times T_{\frac{i\ell}{\ell'}}.$$
Clearly, $N_i\cong T^{\frac{\ell}{\ell'}}$ and $T^\ell=N_1\times N_2\times\cdots \times N_{\ell'}$. For $i\in \{1,\ldots,\ell'\}$, denote by $\pi_i:T^\ell\to N_i$ the natural projection and write $M_i=\pi_i(M)$. Clearly, $M_i$ is $m$-generated and 
\begin{equation}\label{eq0}
T^{\frac{\ell}{\ell'}}=M_i\,S_i^{\frac{\ell}{\ell'}}.
\end{equation}
This shows that in the proof of this lemma we may assume that $\ell'=1$. 

We argue by contradiction and we suppose that $\ell >|T|^{m}$. Let $g_1,\ldots,g_m$ be $m$ generators of $M$. For each $i\in \{1,\ldots,m\}$, we have $g_i=(t_{i,1},\ldots,t_{i,\ell})$, for some $t_{i,j}\in T$. Since the entries $\{t_{1,j}\}_{j}$ of the element $g_1$ are in $T$ and since  $T$ has $|T|$ elements, the pigeon-hole principle gives that there exist at least $\ell/|T|$ coordinates on which $g_1$ is constant. In other words, there exists $X_1\subseteq \{1,\ldots,\ell\}$ with $|X_1|\geq \ell/|T|$ and with $t_{1,j}=t_{1,j'}$, for each $j,j'\in X_1$. Arguing in a similar manner with the element $g_2$ and with the coordinates labelled from the elements of $X_1$, we obtain that there exists $X_2\subseteq X_1$ with $|X_2|\geq |X_1|/|T|$ and with $t_{2,j}=t_{2,j'}$, for each $j,j'\in X_2$. In particular, $|X_2|\geq \ell/|T|^2$. Now, an inductive argument gives that there exists a subset $X_m\subseteq \{1,\ldots,\ell\}$ with $|X_m|\geq \ell/|T|^m$ and with $t_{i,j}=t_{i,j'}$, for every $i\in \{1,\ldots,m\}$ and for every $j,j'\in X_m$.

As we are assuming that $\ell>|T|^m$, we obtain $|X_m|\geq 2$. Relabelling the index set $\{1,\ldots,\ell\}$ if necessary, we may assume that $1,2\in X_m$. Now, consider $\pi:N\to T^2$ the natural projection onto the first two coordinates and consider the diagonal subgroup $D=\{(t,t)\in T^2\mid t\in T\}$ of $T^2$. By construction we have $\pi(M)\leq D$. So, by applying $\pi$ on both sides of~\eqref{eq0}, we obtain 
\begin{equation}\label{eq00}
T^2=\pi(M)(S_1\times S_1)=D(S_1\times S_1).
\end{equation}

Let $t$ be an element of $T\setminus S_1$. From~\eqref{eq00}, we have $(1,t)=(a,a)(b,c)$ for some $a\in T$
and $b,c\in S_1$. This 
yields  $a=b^{-1}\in S_1$ and $t=ac\in S_1$, a
contradiction. This contradiction arose from the assumption
$\ell>|T|^m$. Hence $\ell\leq |T|^{m}$ and the lemma is proved.
\end{proof}

\section{Proof of Theorems~\ref{thrm} and~\ref{qubiqu}}\label{proofs}

\begin{proof}[Proof of Theorem~\ref{qubiqu}]
Without loss of generality, we may assume our graphs to be connected. So, 
let $\Gamma$ be a connected cubic $G$-vertex-transitive graph with $n$ vertices and let $N$ be a minimal normal subgroup of $G$ with $m$ orbits on $V\Gamma$.  

Let $\mathcal{O}_1,\ldots,\mathcal{O}_m$ be the orbits of $N$ on $V\Gamma$. We show that $\Gamma$ contains $m$ vertices $\beta_1,\ldots,\beta_m$, with $\beta_i\in \mathcal{O}_i$ for every $i$, such that the subgraph induced by $\Gamma$ on $\{\beta_1,\ldots,\beta_m\}$ is connected. Let $X$ be a subset of vertices of $\Gamma$ of maximal size with the properties  
\begin{description}
\item[(i)] $|X\cap \mathcal{O}_i|\leq 1$ for every $i\in \{1,\ldots,m\}$, and
\item[(ii)] the subgraph induced by $\Gamma$ on $X$ is connected. 
\end{description}
If $|X|=m$, then the claim is proved. Suppose then that $|X|=l<m$. Without loss of generality we may assume that $X=\{\beta_1,\ldots,\beta_l\}$. Let $\gamma$ be a vertex in $\mathcal{O}_{l+1}$. Since $\Gamma$ is connected, there exists a path $\beta_1=\gamma_1,\ldots,\gamma_u=\gamma$ in $\Gamma$ from $\beta_1$ to $\gamma$. Let $i$ be minimal such that $$\gamma_i\notin \bigcup_{j=1}^l\mathcal{O}_j.$$ In particular, $i\geq 2$ and $\gamma_{i-1}\in \mathcal{O}_{k}$ for some $k\leq l$. Since $N$ is transitive on $\mathcal{O}_{k}$, there exists $g\in N$ such that $\beta_{k}=\gamma_{i-1}^g$. So, $\gamma_i^g$ is adjacent to $\beta_{k}$ and $\gamma_i^g\notin \cup_{j=1}^l\mathcal{O}_j$. Set $X'=X\cup\{\gamma_i^g\}$. By construction, $X\subset X'$, the subgraph induced by $\Gamma$ on $X'$  is connected and $X'$ contains at most one vertex from each $\mathcal{O}_i$. This contradicts the maximality of $X$. Thus $|X|=m$ and the claim is proved.

Fix $\beta_1,\ldots,\beta_m$, with $\beta_i\in \mathcal{O}_i$ for every $i\in \{1,\ldots,m\}$, such that the subgraph induced by $\Gamma$ on $\{\beta_1,\ldots,\beta_m\}$ is connected.

For each $i\in \{1,\ldots,m\}$, write $\Gamma_{\beta_i}=\{\beta_{i,1},\beta_{i,2},\beta_{i,3}\}$. Now, for each $i\in \{1,\ldots,m\}$ and $j\in \{1,2,3\}$, there exists a unique $k_{i,j}\in \{1,\ldots,m\}$ with $\beta_{i,j}\in \mathcal{O}_{k_{i,j}}$. So, choose $g_{i,j}\in N$ with $\beta_{i,j}^{g_{i,j}}=\beta_{k_{i,j}}$ and set 
$$
M=\langle g_{i,j}\mid i\in \{1,\ldots,m\},j\in \{1,2,3\}\rangle.$$
Observe that $M$ is $3m$-generated.

We claim that the orbits of $M$ on $V\Gamma$ are exactly $\mathcal{O}_1,\ldots,\mathcal{O}_m$. Write $\Delta=\beta_1^{M}\cup\cdots \cup\beta_m^{M}$. Observe that from the definition of $M$, we have $\Gamma_{\beta_i}\subseteq \Delta$, for each $i\in \{1,\ldots,m\}$. From this it follows that $\Gamma_\delta\subseteq \Delta$, for each $\delta\in \Delta$. Now, recalling that the subgraph induced by $\Gamma$ on $\{\beta_1,\ldots,\beta_m\}$ is connected, it follows from a connectedness argument that the subgraph induced by $\Gamma$ on $\Delta$ is cubic. As $\Gamma$ is itself cubic, we must have $V\Gamma=\Delta$, from which our claim follows. We obtain 
\begin{equation}\label{eq1}
N=MN_\gamma,\quad\textrm{for every }\gamma\in V\Gamma.
\end{equation}

As $N$ is a minimal normal subgroup of $G$, we have $N=T_1\times \cdots \times T_\ell$, where $\ell\geq 1$ and where $T_1,\ldots,T_\ell$  are pairwise isomorphic simple groups (here $T_i$ is possibly abelian).

Suppose that $N$ is soluble, that is, $T_i$ is cyclic of prime order $p$, for some prime $p$. Assume that $N_\gamma=1$, for $\gamma\in V\Gamma$.  Thus $M=N$  and hence $N$ is $3m$-generated. Clearly, this gives $\ell\leq 3m$ and hence  $|N|\leq p^{3m}$. In particular, $n=|V\Gamma|\leq mp^{3m}$. As the elements of $N$ are semiregular, we see that $G$ contains a semiregular element of order $p$. Finally, since $p\geq (n/m)^{\frac{1}{3m}}$ and since $\lim_{n}(n/m)^{\frac{1}{3m}}=\infty$, the lemma follows. 

Assume that $N_\gamma\neq 1$. So, $p\in \{2,3\}$ by Lemma~\ref{stabiliserorders}. Now $|N:N_\gamma|\leq |M|\leq p^{3m}\leq 3^{3m}$. In particular, $\Gamma$ has order at most $m\cdot 3^{3m}$. As the number of vertices of $\Gamma$ is bounded above by a function of $m$, there is nothing to prove in this case.

Suppose that $N$ is not soluble, that is, $T_i$ is a non-abelian simple group, for each $i\in \{1,\ldots,m\}$. Denote by $\pi_i:N\to T_i$ the natural projection onto the $i$th direct factor of $N$. 
Denote by $K$ the kernel of the action of $G$ on $N$-orbits, that is, $K=\cap_{\alpha\in V\Gamma}(G_\alpha N)$. In particular,
$$K=K_\gamma N,\quad \textrm{for every  }\gamma\in V\Gamma.$$
As $N$ has $m$ orbits on $V\Gamma$, we have $|G:K|\leq m!$.

Since $N$ is a minimal normal subgroup of $G$, the group $G$ acts transitively by conjugation on the set of the simple direct summands $\{T_1,\ldots,T_\ell\}$ of $N$. Moreover, as $K\unlhd G$ and as $|G:K|\leq m!$, we obtain that $K$ has at most $m!$ orbits on $\{T_1,\ldots,T_\ell\}$. Denote by $\ell'$ the number of $K$-orbits on $\{T_1,\ldots,T_\ell\}$. So, $\ell'\leq m!$.  Moreover, observe that as $K=K_\gamma N$ (for $\gamma\in V\Gamma$) and as $N$ acts trivially by conjugation on $\{T_1,\ldots,T_\ell\}$, we have that $K$ and $K_\gamma$ have exactly the same orbits on $\{T_1,\ldots,T_\ell\}$. 

Now, fix $\alpha\in V\Gamma$ and write $S_i=\pi_i(N_\alpha)$. Since $N_\alpha$ is a $\{2,3\}$-group, so is $S_i$. From the Burnside's $p^\alpha q^\beta$-theorem, we have $S_i\neq T$ and hence $S_i$ is a  proper subgroup of $T$. Clearly, $N_\alpha\leq S_1\times \cdots \times S_\ell$. Since $N_\alpha\unlhd K_\alpha$, we see that $K_\alpha$ acts by conjugation on the set $\{S_1,\ldots,S_\ell\}$. Moreover, as  $K_\alpha$ has $\ell'$ orbits on $\{T_1,\ldots,T_\ell\}$ each of size $\ell/\ell'$, we get that $K_\alpha$ has exactly $\ell'$ orbits on $\{S_1,\ldots,S_\ell\}$ each of size $\ell/\ell'$. Let $S_{i_1},\ldots,S_{i_{\ell'}}$ be representatives for the orbits of $K_\alpha$ on $\{S_1,\ldots,S_\ell\}$. Thus we have
$$S_1\times \cdots \times S_\ell\cong (S_{i_1})^{\frac{\ell}{\ell'}}\times \cdots\times (S_{i_{\ell'}})^{\frac{\ell}{\ell'}}.$$
Observe further that as $N\cong T_1^\ell$ and $\Aut(N)\cong \Aut(T_1)\wr \Sym(\ell)$, the above isomorphism is induced by an automorphism $\eta$ of the whole of $N$.

From~\eqref{eq1}, we have $$N=N^\eta=(MN_\alpha)^\eta=M^\eta N_\alpha^\eta\leq  M^\eta\left((S_{i_1})^{\frac{\ell}{\ell'}}\times \cdots\times 
(S_{i_{\ell'}})^{\frac{\ell}{\ell'}}\right)=N$$
and we are in the position to apply Lemma~\ref{lemma14} (with $M$ replaced by $M^\eta$, $m$ replaced by $3m$, and $S_1,\ldots,S_{\ell'}$ replaced by $S_{i_1},\ldots,S_{\ell'}$). We obtain $(\ell/\ell')\leq |T|^{3m}$. Recalling that $\ell'\leq m!$, we get $\ell\leq |T|^{3m}m!$.

Observe that each $N$-orbit has size at most $|N|=|T|^\ell$ and at least $|N:R^\ell|=|T:R|^\ell$, where $R$ is a $\{2,3\}$-subgroup of maximal size of $T$. By Lemma~\ref{easy}, we get
$$m\nu(|T|)^{\ell}\leq n\leq m|T|^\ell.$$
As $\ell$ is bounded above by a function of $|T|$ and $m$ only, we see that the order of $\Gamma$ is trapped between two functions depending only on $|T|$ and $m$. So, the proof follows from Lemma~\ref{simplegroups}.
\end{proof}

In view of Theorem~\ref{qubiqu}, for proving Theorem~\ref{thrm} it suffices to remove the dependency on $m$ in the function $f$.

We now recall the important definition of a normal quotient of a graph. Let $\Gamma$ be a $G$-vertex-transitive graph and let $N$ be a normal subgroup of $G$. Let $\alpha^N$ denote the $N$-orbit containing $\alpha\in V\Gamma$. Then the \textit{normal quotient} $\Gamma/N$  is the graph whose vertices are the $N$-orbits on $V\Gamma$, with an edge between distinct vertices $\alpha^N$ and $\beta^N$ if and only if there is an edge $\{\alpha',\beta'\}$ of $\Gamma$ for some $\alpha'\in \alpha^N$ and some $\beta'\in \beta^N$. Observe that the kernel of the action of $G$ on $N$-orbits is $K=\cap_{\alpha\in V\Gamma}G_\alpha N$. Moreover, $\Gamma/K=\Gamma/N$. The group $G/K$ acts faithfully and transitively on the graph $\Gamma/N$ with vertex-stabilisers $G_\alpha K/K$, for $\alpha\in V\Gamma$. Clearly, if $\Gamma$ is $G$-arc-transitive, then $\Gamma/N$ is $(G/K)$-arc-transitive.

\begin{proof}[Proof of Theorem~\ref{thrm}]
We argue by contradiction and we assume this theorem to be false. This means that there exist a constant $c_1$ and an infinite family of cubic vertex-transitive graphs $\{\Gamma_m\}_{m\in \mathbb{N}}$ such that 
\begin{description}
\item[(i)]$\Gamma_m$ is either a Cayley or an arc-transitive graph, 
\item[(ii)]$|V\Gamma_m|<|V\Gamma_{m+1}|$ and  
\item[(iii)]every semiregular element of $\Aut(\Gamma_m)$ has order at most $c_1$,
\end{description}
for every $m\in\mathbb{N}$. Clearly, we may assume that each $\Gamma_m$ is connected.

Suppose that there exists an infinite subsequence $\{\Gamma_{m_s}\}_{s\in\mathbb{N}}$ of Cayley graphs. For each $s\in \mathbb{N}$, let $X_s$ be a finite group and let $Y_s$ be a finite subset of $X_s$ with $\Gamma_{m_s}=\Cay(X_s,Y_s)$. Now, every element of $X_s$ has order at most $c_1$ and hence $X_s$ has exponent at most $c_1!$. By Lemma~\ref{3gen}, $X_s$ is $3$-generated and hence the positive solution of the restricted Burnside problem gives a constant $c_2$ with $|X_s|\leq c_2$, for every $s\in \mathbb{N}$. However, this contradicts the infinitude of $\{\Gamma_{m_s}\}_{s\in \mathbb{N}}$. So, by replacing $\{\Gamma_m\}_{m\in\mathbb{N}}$ with a proper infinite subfamily if necessary, we may assume that $\Gamma_m$ is arc-transitive, for every $m\in \mathbb{N}$.

For $m\in \mathbb{N}$, write $A_m=\Aut(\Gamma_m)$. By Lemma~\ref{3gen}, the group $A_m$ contains a $6$-generated arc-transitive subgroup $G_m$. Let $N_m$ be a maximal (with respect to inclusion) normal subgroup of $G_m$ with $\Gamma_m/N_m$ cubic.  Define 
$\Delta_m=\Gamma_m/N_m$ and $H_m=G_m/N_m$. Observe that by the maximality of $N_m$, the group $H_m$ acts faithfully on $V\Delta_m$ and hence $\Delta_m$ is a cubic $H_m$-arc-transitive graph.   Moreover, as $\Gamma_m/N_m$ is cubic, it follows from an easy connectedness argument that the normal subgroup $N_m$ acts semiregularly on $V\Gamma_m$.

Suppose that there exists a constant $c_2$ with $|V\Delta_m|<c_2$, for every $m\in \mathbb{N}$. 
Since $G_m$ is $6$-generated and since $|G_m:N_m|=|H_m|\leq |\Sym(V\Delta_m)|\leq c_2!$, we see from the Reidemeister-Schreier theorem~\cite[6.1.8~(ii)]{rob} that $N_m$ is $(5c_2!+1)$-generated. As every semiregular element of $G_m$ has order at most $c_1$ and as $N_m$ acts semiregularly on $V\Gamma$, we see that $N_m$ has exponent dividing $c_1!$. In particular, the number of generators and the exponent of $N_m$ are both bounded above by constants. So, from the positive solution of the restricted Burnside problem, we see that there exists a constant $c_3$ with  $|N_m|\leq c_3$. 
For every $m$ in $\mathbb{N}$, we have $|V\Gamma_m|=|V\Delta_m| |N_m|\leq c_2c_3$, a contradiction. This shows that there exists a subsequence $\{m_s\}_{s\in \mathbb{N}}$ with $|V\Delta_{m_s}|<|V\Delta_{m_{s+1}}|$, for every $s\in \mathbb{N}$.

For each $s\in \mathbb{N}$, let  $M_{m_s}/N_{m_s}$ be a minimal normal subgroup of $G_{m_s}/N_{m_s}$. Write $U_{m_s}=M_{m_s}/N_{m_s}$. Now, the maximality of $N_{m_s}$ yields that $$\Gamma_{m_s}/M_{m_s}\cong \Delta_{m_s}/U_{m_s}$$
  is not cubic. Since $\Delta_{m_s}$ is $H_{m_s}$-arc-transitive, we must have that $U_{m_s}$ has at most two orbits on the vertices of $\Delta_{m_s}$. So, by Theorem~\ref{qubiqu}, $H_{m_s}$ contains a semiregular element $g_{m_s}N_{m_s}$ of order $\geq f(|V\Delta_{m_s}|,2)$. Observe that $g_{m_s}$ acts semiregularly on $V\Gamma_{m_s}$ because $N_{m_s}$ acts semiregularly on $V\Gamma_{m_s}$. Thus $G_{m_s}$ contains a semiregular element of order $\geq f(|V\Delta_{m_s}|,2)$. As $\lim_s|V\Delta_{m_s}|=\infty$, for $s$ sufficiently large $G_{m_s}$ contains a semiregular element of order $>c_1$, a contradiction. This finally proves the theorem.
\end{proof}


\section{Proof of Theorem~\ref{neg}}\label{sec:construction}

Let $m\geq 1$ be an integer. We denote by $\mathbb{F}_3$ the field with $3$ elements and we let $J$ be the $(2^{m}\times 2^{m})$-matrix with coefficients in $\mathbb{F}_3$ defined by
\begin{equation}\label{J}
J_{i,j}=
\left\{
\begin{array}{ccl}
(-1)^{i-j}&&\textrm{if }i>j,\\
-(-1)^{j-i}&&\textrm{if }j>i,\\
0&&\textrm{if }i=j.\\
\end{array}
\right.
\end{equation}
Clearly, the matrix $J$ is antisymmetric, that is, $J_{i,j}=-J_{j,i}$ for every $i,j\in \{1,\ldots,2^{m}\}$. 

We show that $J$ is non-degenerate by performing Gaussian elimination.  Keeping the first row of $J$ and, for each $i\in \{2,\ldots,2^{m}\}$, replacing the $i$th row with the sum of the $(i-1)$th and of the $i$th row  of $J$, we obtain the matrix
\[
J'=\left(
\begin{array}{cccccccccc}
 0 & 1  &-1 &  1   &-1&1& &\cdots&1\\
-1 & 1  & 0 &  0   &0& 0& &\cdots&0\\ 
 0 & -1 & 1 &  0   &0&0& &\cdots&0\\
 0 & 0  & -1 &  1  &0&0& &\cdots&0\\
    &     &     &    &   &  &  &          &\\
    &     &     &      & \cdots&0& -1 &    1 &0\\
    &     &     &      &   & \cdots & 0 &    -1 &1\\
\end{array}
\right).
\] 
Observe that the last $2^{m}-1$ rows of $J'$ are linearly independent. Moreover, the entries in the first row of $J'$ add up to $1$, whilst the entries in the other rows of $J'$ add up to $0$. Thus $J'$  is non-degenerate and so is $J$.

Since $J$ is antisymmetric and non-degenerate,   $J$ determines a non-degenerate symplectic form on the $2^{m}$-dimensional vector space $\mathbb{F}_3^{2^{m}}$. We use this bilinear form to construct an extraspecial $3$-group.

Let $V$ be the group given by the presentation
\begin{eqnarray}\label{V}
V=\big\langle v_1,\ldots,v_{2^{m}},z&\mid& v_i^3=z^3=[v_i,z]=1,\\\nonumber
&&[v_i,v_j]=z^{J_{i,j}},\,\textrm{for every }i,j\in \{1,\ldots,2^{m}\}\big\rangle.
\end{eqnarray}
Observe that $V$ is an extraspecial $3$-group of exponent $3$ and of order $3^{2^{m}+1}$.

Let $a:\{v_1,\ldots,v_{2^m},z\}\to V$ and $b:\{v_1,\ldots,v_{2^m},z\}\to V$ be the maps  defined by
\begin{eqnarray}\label{a}
v_i\mapsto v_i^{a}&=&\left\{
\begin{array}{ccl}
v_{i+1}&&\textrm{if }i\neq 2^{m},\\
v_1^{-1}&&\textrm{if }i=2^{m},
\end{array}
\right.\quad\textrm{and}\quad
z\mapsto z^a\,=\,z,
\end{eqnarray}
\begin{eqnarray}\label{b}
v_i\mapsto v_i^{b}&=&\left\{
\begin{array}{ccl}
v_{2^{m}-i+2}&&\textrm{if }i\neq 1,\\
v_{1}^{-1}&&\textrm{if }i=1,\\
\end{array}
\right.\quad\textrm{and}\quad
z\mapsto z^b\,=\,z^{-1}.
\end{eqnarray}
We show that $a$ and $b$ preserve the defining relations~\eqref{V} of $V$ and hence they naturally extend to two automorphisms of $V$, which we will still denote by $a$ and $b$.

  Since both $a$ and $b$ map $z$ to $z^{\pm 1}$ and $v_i$ to an element of the form $v_j^{\pm 1}$, we see that $$(v_i^a)^3=(v_i^b)^3=(z^a)^3=(z^b)^3=[v_i^a,z^a]=[v_i^b,z^b]=[v_i^a,v_i^a]=[v_i^b,v_i^b]=1,$$for every $i\in \{1,\ldots,2^{m}\}$. It remains to show that $[v_i^a,v_j^a]=(z^a)^{J_{i,j}}$ and $[v_i^b,v_j^b]=(z^b)^{J_{i,j}}$, for every two distinct elements $i,j\in \{1,\ldots,2^{m}\}$.
Observe that,  as the commutator satisfies the identity $[x,y]=[y,x]^{-1}$, we may assume that $i>j$. Furthermore, as $V$ is a nilpotent group of class $2$, we have $[x,y^{-1}]=[x^{-1},y]=[x,y]^{-1}$, for every $x,y\in V$. 

We start by dealing with $a$.
If $i\neq 2^{m}$, then using~\eqref{J} and~\eqref{a} we obtain $$
[v_i^a,v_j^a]=[v_{i+1},v_{j+1}]=z^{J_{i+1,j+1}}=z^{(-1)^{(i+1)-(j+1)}}\\
=z^{(-1)^{i-j}}=z^{J_{i,j}}=(z^a)^{J_{i,j}}.$$
If $i=2^{m}$, then, using again~\eqref{J} and~\eqref{a}, we get 
\begin{eqnarray*}
[v_i^a,v_j^a]&=&[v_{1}^{-1},v_{j+1}]=[v_{1},v_{j+1}]^{-1}=\left(z^{J_{1,j+1}} \right)^{-1}\\
&=&z^{-J_{1,j+1}}=z^{(-1)^{j}}=z^{J_{i,j}}=(z^a)^{J_{i,j}}.
\end{eqnarray*}
Now we consider $b$. If $j\neq 1$, then using~\eqref{J} and~\eqref{b} we have
\begin{eqnarray*}
[v_i^b,v_j^b]&=&[v_{2^{m}-i+2},v_{2^{m}-j+2}]=z^{J_{2^{m}-i+2,2^{m}-j+2}}\\
&=&z^{-(-1)^{(2^{m}-i+2)-(2^{m}-j+2)}}=z^{-(-1)^{j-i}}=z^{J_{j,i}}=z^{-J_{i,j}}=(z^b)^{J_{i,j}}.
\end{eqnarray*}
If $j= 1$, then, using again~\eqref{J} and~\eqref{b}, we see
\begin{eqnarray*}
[v_i^b,v_j^b]&=&[v_{2^m-i+2},v_{1}^{-1}]=[v_{2^m-i+2},v_{1}]^{-1}=\left( z^{J_{2^{m}-i+2,1}} \right)^{-1}\\
&=&\left( z^{(-1)^{i-1}} \right)^{-1}=\left(z^{J_{i,j}}\right)^{-1}=z^{-J_{i,j}}=(z^b)^{J_{i,j}}.
\end{eqnarray*}
Our claim is now proved.

Using~\eqref{a}, we see that 
\begin{equation}\label{apower}
v_i^{a^{2^{m}}}=v_i^{-1},\,\,\,\,\,\, \textrm{for every }\, i\in \{1,\ldots,2^{m}\}.
\end{equation}
In particular, $a$ is an automorphism of $V$ of order $2^{m+1}$. By~\eqref{b}, the element $b^2$ fixes every generator of $V$ and hence $b$ is an automorphism of $V$ of order $2$.  Moreover, combining~\eqref{a} and~\eqref{b}, we get
\begin{eqnarray*}
z^{bab}&=&(z^{-1})^{ab}=(z^{-1})^b=z=z^{a^{-1}},\\
v_1^{bab}&=&(v_1^{-1})^{ab}=(v_1^{ab})^{-1}=(v_2^b)^{-1}=v_{2^{m}}^{-1}=v_1^{a^{-1}},\\
v_2^{bab}&=&(v_{2^{m}})^{ab}=(v_1^{-1})^b=v_1=v_2^{a^{-1}}
\end{eqnarray*}
and, for $i\in \{3,\ldots,2^{m}\}$,
\begin{eqnarray*}
v_i^{bab}&=&(v_{2^{m}-i+2})^{ab}=(v_{2^{m}-i+3})^b=(v_{2^{m}-(i-1)+2})^b=v_{i-1}=v_i^{a^{-1}}.
\end{eqnarray*}
It follows that $bab=a^{-1}$. In particular, 
\begin{equation*}
Q=\langle a,b \rangle
\end{equation*}
is isomorphic to a dihedral group of order $2^{m+2}$.

Now, the group $Q$ acts as a group of automorphisms on the vector space $V/\langle z\rangle$ and hence we may regard $V/\langle z\rangle$ as a $Q$-module over $\mathbb{F}_3$.
\begin{lemma}\label{Qirr}The group $Q$ acts irreducibly on $V/\langle z\rangle$.
\end{lemma}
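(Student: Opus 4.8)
The plan is to study $\overline V:=V/\langle z\rangle$ first as a module for the cyclic subgroup $\langle a\rangle$ of $Q$, and only afterwards let $b$ act. Write $\overline v_1,\dots,\overline v_{2^m}$ for the images of $v_1,\dots,v_{2^m}$, a basis of $\overline V$ over $\mathbb{F}_3$. By~\eqref{a}, $a$ sends $\overline v_i\mapsto\overline v_{i+1}$ for $i<2^m$ and $\overline v_{2^m}\mapsto-\overline v_1$, so $\overline v_1,\overline v_1 a,\dots,\overline v_1 a^{2^m-1}$ is exactly this basis; that is, $\overline v_1$ is a cyclic vector for $\langle a\rangle$. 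By~\eqref{apower}, $a^{2^m}$ acts on $\overline V$ as $-1$, so the minimal polynomial of $a$ on $\overline V$ divides $x^{2^m}+1$; since $\overline V$ is cyclic of dimension $2^m$ this forces an isomorphism of $\mathbb{F}_3[\langle a\rangle]$-modules
\[
\overline V\;\cong\;\mathbb{F}_3[x]/(x^{2^m}+1),
\]
with $a$ acting as multiplication by $x$.

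Next I factor $x^{2^m}+1=\Phi_{2^{m+1}}(x)$ over $\mathbb{F}_3$. Since $3$ is coprime to $2^{m+1}$, this polynomial is separable, and it splits into $2^m/d$ distinct monic irreducible factors of common degree $d=\mathrm{ord}_{2^{m+1}}(3)$. When $m=1$ we have $d=2$, so $x^2+1$ is irreducible over $\mathbb{F}_3$ and $\overline V$ is already an irreducible $\mathbb{F}_3[\langle a\rangle]$-module — hence an irreducible $\mathbb{F}_3[Q]$-module — and the lemma follows at once. So assume $m\geq 2$. Using $\mathrm{ord}_{2^{m+1}}(3)=2^{m-1}$ (a consequence of the $2$-adic valuation identity for $3^{2^k}-1$), we get $x^{2^m}+1=f_1f_2$ with $f_1,f_2$ distinct irreducibles of degree $2^{m-1}$. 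By the Chinese Remainder Theorem $\overline V=W_1\oplus W_2$ as $\mathbb{F}_3[\langle a\rangle]$-modules, where $W_i=f_{3-i}(a)\overline V$ is the simple submodule killed by $f_i(a)$. As $W_1\not\cong W_2$ and each occurs once, the only $\langle a\rangle$-submodules of $\overline V$ are $0,W_1,W_2,\overline V$.

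Now let $b$ act. From $bab=a^{-1}$ one gets $b\,g(a)=g(a^{-1})\,b$ for every polynomial $g$, so for any $\langle a\rangle$-submodule $U$ the subspace $Ub$ is again an $\langle a\rangle$-submodule, and $W_1b$ is the submodule killed by $f_1(a^{-1})$, i.e.\ by the reciprocal polynomial $f_1^{\ast}(x)=x^{2^{m-1}}f_1(1/x)$. The roots of $x^{2^m}+1$ in $\overline{\mathbb{F}}_3$ are the primitive $2^{m+1}$-th roots of unity, and $f_1,f_2$ correspond to the two Frobenius ($x\mapsto x^3$) orbits on them; inversion $\zeta\mapsto\zeta^{-1}$ commutes with Frobenius, hence permutes these two orbits. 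The decisive arithmetic input is that $-1\notin\langle 3\rangle\leq(\mathbb{Z}/2^{m+1}\mathbb{Z})^{\times}$ for $m\geq 2$: equivalently, the unique involution of the cyclic group $\langle 3\rangle$ is not $-1$, which follows from the standard fact $\mathrm{val}_2(3^{2^k}-1)=k+2$ for $k\geq 1$. Consequently $\zeta^{-1}$ lies in the Frobenius orbit \emph{other} than that of $\zeta$, so $f_1^{\ast}$ is a scalar multiple of $f_2$ (not of $f_1$), whence $W_1b=W_2$ and symmetrically $W_2b=W_1$. Thus neither $W_1$ nor $W_2$ is $b$-invariant, so $0$ and $\overline V$ are the only $\mathbb{F}_3[Q]$-submodules of $\overline V$, which proves the lemma.

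I expect the only genuinely non-routine ingredient to be this arithmetic fact that $-1\notin\langle 3\rangle$ in $(\mathbb{Z}/2^{m+1}\mathbb{Z})^{\times}$, equivalently that the irreducible factors of $x^{2^m}+1$ over $\mathbb{F}_3$ are not self-reciprocal; everything else is bookkeeping with cyclic modules and reciprocal polynomials. In the write-up I would isolate the computation $\mathrm{val}_2(3^{2^k}-1)=k+2$ (by lifting the exponent), from which both $\mathrm{ord}_{2^{m+1}}(3)=2^{m-1}$ and the fact that the involution of $\langle 3\rangle$ is $\not\equiv-1\pmod{2^{m+1}}$ are immediate.
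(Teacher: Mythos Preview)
Your proof is correct and follows the same overall strategy as the paper: decompose $\overline V$ into two non-isomorphic irreducible $\langle a\rangle$-submodules of dimension $2^{m-1}$ (so that these are the \emph{only} proper $\langle a\rangle$-submodules), and then show that $b$ interchanges them. The difference lies in how the interchange is established. The paper writes down the explicit factorisation
\[
T^{2^m}+1=(T^{2^{m-1}}+T^{2^{m-2}}-1)(T^{2^{m-1}}-T^{2^{m-2}}-1)
\]
over $\mathbb{F}_3$ (handling $m\le 2$ by direct computation), defines $W_\pm$ as the corresponding kernels, and then checks by a one-line polynomial manipulation that substituting $a\mapsto a^{-1}$ carries one factor to (a unit times) the other, so $W_+^b=W_-$. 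You instead argue abstractly via reciprocal polynomials and Frobenius orbits on the primitive $2^{m+1}$th roots of unity, reducing the swap to the arithmetic fact that $-1\notin\langle 3\rangle$ in $(\mathbb{Z}/2^{m+1}\mathbb{Z})^\times$ for $m\ge 2$. Both routes rest on the same order computation $\mathrm{ord}_{2^{m+1}}(3)=2^{m-1}$; the paper's explicit factorisation makes the $b$-swap a mechanical calculation with no further number theory needed, while your version explains conceptually \emph{why} the swap occurs (the factors are not self-reciprocal) and would adapt to other primes and moduli without having to guess a factorisation.
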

\begin{proof}
If $m=1$ or $m=2$, then the lemma follows from a direct computation. Assume that $m\geq 3$. Write $W=V/\langle z\rangle$. We use an additive notation for the elements of $W$. 
Let $U$ be an irreducible $\langle a\rangle$-submodule of $W$ and let $\ell$ be the dimension of $U$ over $\mathbb{F}_3$. Observe that since $a^{2^m}$ acts as the inversion on $W$ by~\eqref{apower}, we see that $a^{2^{m}}$ acts faithfully on $U$ and so does $a$. Hence by~\cite[$9.4.3$]{rob}, we get that $2^{m+1}=|a|$ divides $3^\ell-1$ and that $\ell$ is the smallest positive integer with $3^\ell\equiv 1\mod 2^{m+1}$. Now, using the binomial expansion $3^{\ell}=(1+2)^{\ell}=\sum{\ell\choose i}2^i$ and using $m\geq 3$, a computation shows that 
\begin{eqnarray*}
3^{2^{m-1}}&\equiv&1\mod 2^{m+1},\\
3^{2^{m-2}}&\equiv&1+2^m\mod 2^{m+1}.
\end{eqnarray*} Therefore $2^{m-1}=\ell$ and hence $\dim U=2^{m-1}$.

By~\eqref{apower} the element $a^{2^m}$ acts by inverting each element of $W$ and hence the characteristic polynomial of $a$ in its action on $W$ is $T^{2^m}+1$. Since the characteristic of $W$ is $3$, we obtain 
\begin{equation}
T^{2^m}+1=(T^{2^{m-1}}+T^{2^{m-2}}-1)(T^{2^{m-1}}-T^{2^{m-2}}-1).
\end{equation}
Write 
\begin{eqnarray*}
W_+&=&\{w\in W\mid w^{a^{2^{m-1}}+a^{2^{m-2}}-1}=0\},\\ 
W_{-}&=&\{w\in W\mid w^{a^{2^{m-1}}-a^{2^{m-2}}-1}=0\}.
\end{eqnarray*}
Clearly, $W_{+}$ and $W_-$ are $\langle a\rangle$-invariant subspaces of $W$ with $\dim W_+=\dim W_-=2^{m-1}$. From the previous paragraph, we see that $W_+$ and $W_-$ are irreducible $\langle a\rangle$-modules. 

Since the characteristic polynomials of the action $a$ on $W_+$ and $W_-$ are different, we see that $W_+$ and $W_-$ are non-isomorphic $\langle a\rangle$-modules. Moreover, as the order of $\langle a\rangle$   is coprime to $3$, we deduce from Maschke's theorem that $W_+$ and $W_-$ are the only proper $\langle a\rangle$-submodules of $W$.

Let $U$ be a non-zero $Q$-submodule of $W$. In particular, $U$ is a non-zero $\langle a\rangle$-submodule of $W$ and hence $W_+\leq U$ or $W_-\leq U$. We have
\begin{eqnarray*}
0&=&0^b=(a^{2^{m-1}}+a^{2^{m-2}}-1)^b=(a^b)^{2^{m-1}}+(a^b)^{2^{m-2}}-1\\
&=&a^{-2^{m-1}}+a^{-2^{m-2}}-1
\end{eqnarray*}
and, multiplying by $-a^{2^{m-1}}$, we obtain $-1-a^{2^{m-2}}+a^{2^{m-1}}=0$.
This shows that $W_+^b=W_-$. Thus $W=W_+\oplus W_-\leq U$ and hence $W$ is an irreducible $Q$-module.
\end{proof}

Define 
\begin{equation}\label{G}G=V\rtimes Q. 
\end{equation}
In what follows we denote the elements of $G$ as ordered pairs $xv$, with $x\in Q$ and $v\in V$. In particular, for $x,y\in Q$ and $v,w\in V$, we have $(xv)(yw)=(xy)(v^yw)$. Moreover, we identify $Q$ and $V$ with their corresponding isomorphic copies in $G$.

Write $H=\langle a^{2^{m}}\rangle$ and let $\Omega$ be the set of right cosets of $H$ in $G$. Clearly, $H^{v_1}=\langle a^{2^m}v_1^2\rangle$ by~\eqref{apower} and hence $H\cap H^{v_1}=1$. Thus $H$ is core-free in $G$ and the action by right multiplication of $G$ on $\Omega$ endows $G$ of the structure of a permutation group. For the rest of this section we regard $G$ as a subgroup of $\Sym(\Omega)$.

\begin{lemma}\label{semiregularG}A semiregular element of $G$ has order $1$, $2$, $3$ or $6$.
\end{lemma}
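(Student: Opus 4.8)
The plan is to rephrase semiregularity in terms of the conjugacy class of a single involution. Put $t=a^{2^{m}}$, so that $H=\langle t\rangle$ has order $2$ and $\Omega$ is the set of cosets $H\backslash G$. For $x\in G$ the point $Hx$ has stabiliser $H^{x}=\langle t^{x}\rangle$ of order $2$, so $\langle g\rangle$ meets $H^{x}$ non-trivially precisely when $t^{x}\in\langle g\rangle$. Hence $g$ is semiregular on $\Omega$ if and only if $\langle g\rangle$ contains no $G$-conjugate of $t$. The lemma is therefore equivalent to the claim that, whenever $|g|\notin\{1,2,3,6\}$, some conjugate of $t$ lies in $\langle g\rangle$.

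First I would pass to the quotient $G\to G/V\cong Q$ and write $\bar g$ for the image of $g$. Since $V$ has exponent $3$ and $g^{|\bar g|}\in V$, we get $g^{3|\bar g|}=1$, whence $|g|\in\{|\bar g|,\,3|\bar g|\}$. Recall that $Q$ is dihedral of order $2^{m+2}$ with cyclic subgroup $\langle a\rangle$ of order $2^{m+1}$. If $\bar g=1$ then $g\in V$ and $|g|\in\{1,3\}$; if $\bar g\notin\langle a\rangle$ then $\bar g$ is a reflection, so $|\bar g|=2$ and $|g|\in\{2,6\}$. In both cases there is nothing to prove, so suppose $\bar g=a^{j}$ with $j\neq 0$ and set $n=|\bar g|$, a power of $2$ with $n\geq 2$. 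Then $|g|$ is even, so the cyclic group $\langle g\rangle$ has a unique involution $\iota$. Its image in $Q$ is $(a^{j})^{|g|/2}$, which equals the unique involution $a^{2^{m}}=t$ of $\langle a\rangle$ in both cases $|g|=n$ and $|g|=3n$ (using $(a^{j})^{n/2}=t$ and $t^{3}=t$). Consequently $\iota=tw$ for some $w\in V$, and from $\iota^{2}=t^{2}w^{t}w=w^{t}w=1$ we obtain $w^{t}=w^{-1}$.

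The crux is to show that every involution of the form $tw$ with $w^{t}=w^{-1}$ is $G$-conjugate to $t$; granting this, $\iota\in\langle g\rangle$ is a conjugate of $t$, so $g$ fails to be semiregular and the lemma follows. By~\eqref{apower}, $t=a^{2^{m}}$ acts on $V$ by inverting each $v_{i}$ and fixing $z$. A short computation in the normal form of $V$ gives $\cent{V}{t}=\langle z\rangle$, so $|t^{V}|=|V|/3$. Moreover $t^{v_{0}}=t\cdot(v_{0}^{-1})^{t}v_{0}$ and $\big((v_{0}^{-1})^{t}v_{0}\big)^{t}=\big((v_{0}^{-1})^{t}v_{0}\big)^{-1}$, so every element of $t^{V}$ has the shape $tw'$ with $w'$ in $V^{-}:=\{u\in V:u^{t}=u^{-1}\}$. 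Finally, distinct elements of $V^{-}$ have distinct images in $V/\langle z\rangle$ (if $w_{1},w_{2}\in V^{-}$ and $w_{2}=w_{1}z^{k}$, then $w_{1}^{-1}z^{k}=w_{2}^{t}=w_{2}^{-1}=w_{1}^{-1}z^{-k}$, forcing $k=0$), so $|V^{-}|\leq|V|/3$. Comparing orders forces $t^{V}=\{tw':w'\in V^{-}\}$; in particular $tw\in t^{V}\subseteq t^{G}$, as needed, and the proof is complete.

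The only slightly delicate points are the two normal-form computations $\cent{V}{t}=\langle z\rangle$ and $|V^{-}|\leq|V|/3$, together with the elementary dihedral-group facts that a non-rotation of $Q$ is an involution and that $(a^{j})^{n/2}=a^{2^{m}}$ when $|a^{j}|=n$; I do not anticipate any real obstacle beyond this bookkeeping.
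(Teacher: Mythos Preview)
Your argument is correct. The rephrasing of semiregularity in terms of avoiding conjugates of $t$, the case split on $\bar g$, the identification of the unique involution $\iota=tw$ with $w^{t}=w^{-1}$, and the counting $|t^{V}|=|V|/3=|V^{-}|$ forcing $t^{V}=\{tw':w'\in V^{-}\}$ all check out.

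The paper's proof, however, is much shorter and takes a different route. It observes that if $g$ is semiregular of order $\ell\notin\{1,2,3,6\}$ then (since $V$ has exponent $3$) $4\mid\ell$, replaces $g$ by $g^{\ell/4}$ to reduce to $\ell=4$, and then uses Sylow to conjugate $g$ into $Q$; since $Q$ is dihedral its unique element of order $2$ in $\langle a\rangle$ is $a^{2^{m}}$, so $g^{2}=a^{2^{m}}\in H$ fixes the point $H$, a contradiction. Thus the paper avoids any computation inside $V$ by moving the problem entirely into $Q$ via Sylow conjugacy, whereas you stay with a general $g$ and instead compute the $V$-conjugacy class of $t$ explicitly. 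Your approach yields the extra information that every involution in $tV$ is already $V$-conjugate to $t$, at the cost of the bookkeeping with $\cent{V}{t}$ and $V^{-}$; the paper's approach is a two-line reduction but gives no such structural byproduct.
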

\begin{proof}
We argue by contradiction and we let $g$ be a semiregular element of order $\ell$ with $\ell\notin\{1,2,3,6\}$. Since $V$ has exponent $3$, $\ell$ is divisible by $4$. So, replacing $g$ by $g^{\ell/4}$, we may assume that $\ell=4$. Since $Q$ is a Sylow $2$-subgroup of $G$,  replacing $g$ by a suitable $G$-conjugate, we may assume that $g\in Q$.  As $Q$ is a dihedral group, we see that $g^2=a^{2^{m}}\in H$ and hence $g^2$ fixes the point $H$ of $\Omega$, a contradiction. 
\end{proof}

Let $\Gamma$ be the directed graph with vertex set $\Omega$ and with arc set 
\begin{equation}\label{Gamma}
\{(Hg,Habg)\mid g\in G\}\cup
\{(Hg,Hbv_1g)\mid g\in G\}\cup
\{(Hg,Hbv_1^{-1}g)\mid g\in G\}.
\end{equation}
\begin{lemma}\label{1}The graph $\Gamma$ has $2^{m+1}3^{2^m+1}$ vertices. Moreover, $\Gamma$ is connected, undirected, cubic and $G$-vertex-transitive.
\end{lemma}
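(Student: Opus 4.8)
The plan is to set $s_1=ab$, $s_2=bv_1$, $s_3=bv_1^{-1}$, so that by \eqref{Gamma} the arc set of $\Gamma$ is $\bigcup_{i=1}^{3}\{(Hg,Hs_ig)\mid g\in G\}$, and to treat the four assertions in turn. The vertex count is immediate: since $a$ has order $2^{m+1}$, the subgroup $H=\langle a^{2^m}\rangle$ has order $2$, so $|\Omega|=|G:H|=|V|\,|Q|/2=3^{2^m+1}\cdot 2^{m+2}/2=2^{m+1}3^{2^m+1}$. Vertex-transitivity is equally easy: right multiplication by $x\in G$ sends the arc $(Hg,Hs_ig)$ to $(Hgx,Hs_igx)$, so $G\le\Aut(\Gamma)$, and $G$ is transitive on $\Omega$ via the coset action; hence $\Gamma$ is $G$-vertex-transitive.

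For the claim that $\Gamma$ is undirected and cubic, I would first record that each $s_i$ is an involution: $(ab)^2=a(bab)=aa^{-1}=1$, and $(bv_1^{\pm1})^2=b^2\,(v_1^{\pm1})^b\,v_1^{\pm1}=v_1^{\mp1}v_1^{\pm1}=1$ by \eqref{b}. Consequently, whenever $(Hg,Hs_ig)$ is an arc, so is $(Hs_ig,Hs_i(s_ig))=(Hs_ig,Hg)$, so $\Gamma$ is undirected. Next I would observe that, since $a^{2^m}\in\Z Q$ and $v_1^{a^{2^m}}=v_1^{-1}$ by \eqref{apower}, conjugation by the involution $a^{2^m}$ fixes $s_1=ab$ and interchanges $s_2=bv_1$ and $s_3=bv_1^{-1}$; hence $a^{2^m}$ permutes $\{s_1,s_2,s_3\}$. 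The out-neighbours of a vertex $Hg$ form the set $\{Hs_ihg\mid i\in\{1,2,3\},\ h\in H\}$, and since $H=\{1,a^{2^m}\}$ and $s_ia^{2^m}=a^{2^m}\big((a^{2^m})^{-1}s_ia^{2^m}\big)$ gives $Hs_ia^{2^m}=Hs_i^{a^{2^m}}$, this set equals $\{Hs_1g,Hs_2g,Hs_3g\}$. Finally I would check these three cosets are pairwise distinct and distinct from $Hg$: one computes $s_1s_2^{-1}=av_1$, $s_1s_3^{-1}=av_1^{-1}$, $s_2s_3^{-1}=v_1$, none of which lies in $H\le Q$, and none of $s_1,s_2,s_3$ lies in $H$. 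Thus every vertex has exactly three neighbours and $\Gamma$ has no loops or multiple edges, i.e.\ $\Gamma$ is cubic.

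For connectedness it suffices to prove $\langle s_1,s_2,s_3\rangle=G$: then $\mathrm{Cay}(G,\{s_1,s_2,s_3\})$ (connection set inverse-closed and identity-free) is connected, and the vertex-surjective graph homomorphism $G\to\Omega$, $g\mapsto Hg$, maps it onto $\Gamma$, forcing $\Gamma$ connected. Now $s_2s_3=v_1$, then $s_2v_1^{-1}=b$, then $s_1b=a$, so $\langle s_1,s_2,s_3\rangle$ contains $Q=\langle a,b\rangle$ together with $v_1$, hence all conjugates $v_1^{q}$ with $q\in Q$. The subgroup they generate is $Q$-invariant, so its image in $V/\langle z\rangle$ is a non-zero $Q$-submodule, hence all of $V/\langle z\rangle$ by Lemma~\ref{Qirr}; since the symplectic form defined by $J$ is non-degenerate and $2^m\ge2$, this image contains a pair of non-commuting elements, so the subgroup is non-abelian and therefore contains $[V,V]=\langle z\rangle$, whence it equals $V$. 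Thus $\langle s_1,s_2,s_3\rangle\supseteq V\rtimes Q=G$, as required.

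The main obstacle is the out-degree computation. Because $a^{2^m}$ fails to centralise $V$ (it inverts each $v_i$ by \eqref{apower}), the subgroup $H$ is \emph{not} normal in $G$, and the naive out-neighbourhood $\{Hs_ihg\mid i, h\in H\}$ of a vertex has up to six elements. The construction is arranged precisely so that conjugation by $a^{2^m}$ permutes $\{s_1,s_2,s_3\}$, which collapses this to three; verifying $s_2^{a^{2^m}}=s_3$ and the distinctness of the three cosets are the computations needing care, whereas the number of vertices, vertex-transitivity, and the undirectedness are formal.
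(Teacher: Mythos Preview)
Your proof is correct and follows essentially the same route as the paper: compute $|\Omega|$, verify the $s_i$ are involutions to get undirectedness, show the six a~priori out-neighbours $Hs_ihg$ collapse to three because conjugation by $a^{2^m}$ permutes $\{s_1,s_2,s_3\}$, and prove $\langle s_1,s_2,s_3\rangle=G$ for connectedness. One minor difference: for the step that the $Q$-conjugates of $v_1$ generate $V$, the paper simply uses \eqref{a} to read off $v_1^{a^{i-1}}=v_i$ for $1\le i\le 2^m$, whereas you route through Lemma~\ref{Qirr} and a non-abelianness argument---valid, but more machinery than needed.
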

\begin{proof}
The vertex set of $\Gamma$ is $\Omega$ and $|\Omega|=|G:H|=|Q||V|/|H|=2^{m+1}3^{2^m+1}$.

 Now $ab$, $bv_1$ and $bv_1^{-1}$ have order $2$ because $(ab)^2=abab=aa^{-1}=1$ and $(bv_1^{\pm 1})^2=b^2(v_1^{\pm1})^bv_1^{\pm 1}=(v_1^{\pm 1})^{-1}v_1^{\pm 1}=1$ by~\eqref{b}.  Therefore $(Hab,H)$, $(Hbv_1,H)$ and $(Hbv_1^{-1},H)$ are arcs of  $\Gamma$. This shows that $\Gamma$ is undirected. 

The vertices of $\Gamma$ adjacent to $H$ are of the form $Habh$, $Hbv_1h$ and $Hbv_1^{-1}h$, as $h$ runs through the elements of $H$. Using~\eqref{apower} we get 
\begin{eqnarray*}
(Hab)a^{2^{m}}&=&Haba^{2^{m}}=Ha^{2^{m}}ab=Hab,\\
(Hbv_1)a^{2^{m}}&=&Hba^{2^{m}}(v_1)^{a^{2^{m}}}=Ha^{2^{m}}bv_1^{-1}=Hbv_1^{-1},\\
(Hbv_1^{-1})a^{2^{m}}&=&Hba^{2^{m}}(v_1^{-1})^{a^{2^{m}}}=Ha^{2^{m}}bv_1=Hbv_1.\\
\end{eqnarray*}
Since $H=\langle a^{2^m}\rangle$, the neighbourhood of $H$ is $\{Hab,Hbv_1,Hbv_1^{-1}\}$ and hence $\Gamma$ is cubic. 

The definition of the arc set~\eqref{Gamma} immediately gives that $\Gamma$ is $G$-vertex-transitive. It remains to show that $\Gamma$ is connected. Write $K=\langle ab,bv_1,bv_1^{-1}\rangle$. 
Observe that the elements $ab$, $bv_1$ and $bv_1^{-1}$ map the vertex $H$ to each of its three neighbours. Therefore, the connected component of $\Gamma$ containing $H$ is $\{Hk\mid k\in K\}$. Thus it suffices to prove that $G=K$. Now, $(bv_1)(bv_1^{-1})=b^2v_1^bv_1^{-1}=v_1^{-2}=v_1$ by~\eqref{b}. Thus $v_1\in K$ and hence $b=(bv_1)(v_1)^{-1}\in K$. Now, $a=(ab)b\in K$. Thus $Q\leq K$. Observing that the conjugates of $v_1$ under $Q$ generate the whole of $V$, we get $V\leq K$ and hence $G=K$.
\end{proof}

Write $A=\Aut(\Gamma)$. (As usual, we denote by $A_\alpha$ the stabiliser in $A$ of the vertex $\alpha$ of $\Gamma$. In particular, $A_H$ is the stabiliser of the vertex $H$.)

\begin{lemma}\label{2}$A_H$ is a $2$-group and $|A:G|$ is a $2$-power.
\end{lemma}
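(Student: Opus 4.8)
Here is the plan.

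The plan is to reduce the statement to the single fact that $\Gamma$ is \emph{not} arc-transitive. Write $A=\Aut(\Gamma)$. By Lemma~\ref{1}, $\Gamma$ is connected and cubic and $G\le A$ acts transitively on $V\Gamma=\Omega$, so $\Gamma$ is $A$-vertex-transitive. Applying Lemma~\ref{stabiliserorders} with $A$ in the role of $G$, either $A_H$ is a $2$-group, or $\Gamma$ is $A$-arc-transitive and $|A_H|=3\cdot 2^{s}$ for some $s\ge 0$. Hence the whole statement follows once we know that $\Gamma$ is not $A$-arc-transitive: then $A_H$ is a $2$-group, and since the stabiliser in $G$ of the vertex $H$ (under right multiplication on cosets) equals $H$, of order $2$, we get $|A:G|=|A_H|/|G_H|=|A_H|/2$, a power of $2$.

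To show $\Gamma$ is not $A$-arc-transitive I would show it is not $A$-edge-transitive, i.e.\ that the edges $e_1=\{H,Hab\}$ and $e_2=\{H,Hbv_1\}$ lie in different $A$-orbits. (Note $G$ itself is already not edge-transitive, since $a^{2^{m}}\in G_H$ fixes $Hab$ and interchanges $Hbv_1,Hbv_1^{-1}$ by the computations in the proof of Lemma~\ref{1}; so we genuinely must control all of $A$, not just $G$.) I would separate $e_1$ from $e_2$ by the automorphism-invariant property ``the edge lies on a cycle of length $6$''.

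For $e_2$ this is easy: by~\eqref{b} we have $(bv_1)(bv_1^{-1})=v_1^{-2}=v_1$, so $\langle bv_1,bv_1^{-1}\rangle=\langle b,v_1\rangle\cong\Sym(3)$ and $\big((bv_1)(bv_1^{-1})\big)^{3}=v_1^{3}=1\in H$; the corresponding closed walk $H,\ Hbv_1^{-1},\ Hv_1,\ Hb,\ Hv_1^{-1},\ Hbv_1,\ H$ is readily checked to visit six distinct vertices, giving a $6$-cycle through $e_2$. For $e_1$ one must rule out any $6$-cycle. A simple $6$-cycle through $e_1$ corresponds to a word $x_6x_5\cdots x_1$ with each $x_i\in\{ab,bv_1,bv_1^{-1}\}$, with $x_1=ab$, with $x_6x_5\cdots x_1\in H=\langle a^{2^{m}}\rangle$, and with no two cyclically consecutive $x_i$ equal; thus the positions carrying a fixed letter form an independent set of $C_6$, and in particular $ab$ occurs $p$ times with $p\in\{1,2,3\}$, and when $p=2$ the two occurrences of $ab$ sit in positions $\{1,3\}$, $\{1,4\}$ or $\{1,5\}$. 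Projecting modulo $V$ onto $Q$ (dihedral of order $2^{m+2}$, with $ab\mapsto ab$ and $bv_1^{\pm1}\mapsto b$), one computes in each of these at most five configurations that the $Q$-component of $x_6\cdots x_1$ is $a^{-1}$, $a^{-2}$, $a^{-3}$ or $1$. For $m\ge 2$ the first three values lie outside $H$ (the small cases $m\le 2$ being dealt with by direct computation), while the value $1$ occurs only for the configuration $x_1=x_4=ab$, and there a short computation in $V\rtimes Q$ using~\eqref{V},~\eqref{a} and~\eqref{b} gives $x_6\cdots x_1=v_1^{-\gamma}v_{2^{m}}^{-\alpha}$ for suitable $\alpha,\gamma\in\{\pm1\}$, a non-trivial element of $V$ since $v_1\ne v_{2^{m}}$ (as $2^{m}\ge 2$). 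In every case $x_6\cdots x_1\notin H$, so no $6$-cycle passes through $e_1$. Consequently $e_1$ and $e_2$ are in distinct $A$-orbits, $\Gamma$ is not $A$-arc-transitive, and the lemma follows from the first paragraph. The only real obstacle is this last step: carrying out the finite but slightly lengthy case analysis of length-$6$ words in the three connection elements of $G=V\rtimes Q$ and verifying that none closes up inside $H$ once the letter $ab$ is used.
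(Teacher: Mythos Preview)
Your approach is essentially the same as the paper's: both reduce the lemma to showing that $\Gamma$ is not arc-transitive, and both do this by exhibiting a $6$-cycle through $H$ using the two neighbours $Hbv_1$, $Hbv_1^{-1}$ while showing no $6$-cycle exists on the ``$ab$-side''. The only difference is in the execution of this last verification. The paper argues geometrically: it records all vertices at distance at most $2$ from $H$ and from $Hab$ (displayed as Figure~\ref{finger}) and observes from the picture that no $6$-cycle through $H$, $Hab$ and $Hbv_1$ can close up, since the two branches of the tree are disjoint. You instead argue algebraically, analysing all reduced words of length $6$ in $\{ab,bv_1,bv_1^{-1}\}$ beginning with $ab$, projecting to $Q$ to kill most cases, and handling the surviving configuration $x_1=x_4=ab$ by a short computation in $V$ (using $v_1^{ab}=v_{2^m}$). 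Your statement is marginally stronger (no $6$-cycle through the edge $\{H,Hab\}$ at all, rather than none through $H,Hab,Hbv_1$), and your phrasing ``$m\le 2$ by direct computation'' should really read $m=1$; but the argument is correct and amounts to the same idea as the paper's.
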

\begin{proof}
Since $G$ acts transitively on $V\Gamma$, we have $A=A_HG$ and hence it suffices to show that $A_H$ is a $2$-group. We argue by contradiction. So $A_H$ acts transitively on $\Gamma_H$ by Lemma~\ref{stabiliserorders}. Now,  it is easy to verify that
$$\left(H,Hbv_1,Hv_1^{-1},Hb,Hv_1,Hbv_1^{-1}\right)$$
is a cycle of $\Gamma$ of length $6$ passing through the two neighbours $Hbv_1$ and $Hbv_1^{-1}$ of $H$. As $A_H$ is transitive on $\Gamma_H$, the graph $\Gamma$ admits a cycle of length $6$ passing through the two neighbours $Hab$ and $Hbv_1$ of $H$. 

Using the definition of $\Gamma$ and~\eqref{a} and~\eqref{b}, we compute all the vertices of $\Gamma$ at distance $\leq 2$ from either $H$ or $Hab$. Figure~\ref{finger} depicts this  small neighbourhood of $\Gamma$ and shows that there is no cycle of length $6$ containing $H,Hab$ and $Hbv_1$. (For simplicity, in Figure~\ref{finger} we write $x$ to denote the vertex $Hx$.) This contradiction shows that $\Gamma$ is not arc-transitive.

\begin{figure}[!hhh]
\begin{tikzpicture}[node distance =.7cm]
\tikzset{myarrow/.style={==, thick}}
\node[circle,inner sep=0pt, label=90:$1$](A0){};
\node[right=of A0](A1){};
\node[above=of A1](A2){$bv_1^{-1}$};
\node[left=of A0](A3){};
\node[above=of A3](A4){$bv_1$};
\node[above=of A2](A5){$v_1$};
\node[right=of A5](A6){$av_1^{-1}$};
\node[above=of A4](A7){$v_1^{-1}$};
\node[left=of A7](A8){$av_1$};
\node[below=of A0](B0){$ab$};
\node[below=of B0](B1){};
\node[left=of B1](B2){$a^{-1}v_{2^m}$};
\node[right=of B1](B3){$a^{-1}v_{2^m}^{-1}$};
\node[below=of B2](B4){$abv_{2^m}^{-1}$};
\node[left=of B4](B5){$a^2bv_{2^m}$};
\node[below=of B3](B6){$a^2bv_{2^m}^{-1}$};
\node[right=of B6](B7){$abv_{2^m}$};
\draw(A0) to (B0);
\draw(B0)to (B2);
\draw(B0)to (B3);
\draw(B2)to (B4);
\draw(B2) to (B5);
\draw(B3) to (B6);
\draw(B3)to (B7);
\draw(A0) to (A2);
\draw(A0) to(A4); 
\draw(A2)to(A5);
\draw(A2) to (A6);
\draw(A4) to (A7);
\draw(A4) to (A8);
 \end{tikzpicture}
  \caption{Local structure of $\Gamma$}\label{finger}
\end{figure}
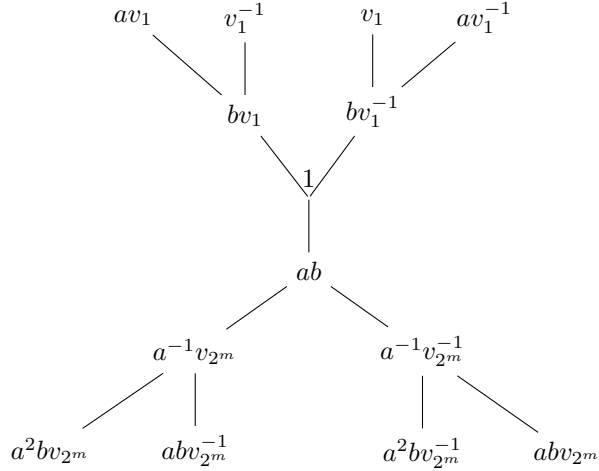
\end{proof}
Before continuing our discussion we need an elementary lemma on the abelian subgroups of the general linear group $\GL_\ell(2)$.
\begin{lemma}\label{ab}
An elementary abelian $3$-subgroup of $\mathrm{GL}_\ell(2)$ has size at most $3^{\ell/2}$.
\end{lemma}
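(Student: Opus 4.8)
The plan is to bound the size of an elementary abelian $3$-subgroup $E$ of $\GL_\ell(2)$ by reducing to the structure of irreducible $\mathbb{F}_2[E]$-modules. First I would let $E$ act on the natural module $\mathbb{F}_2^\ell$ and decompose $\mathbb{F}_2^\ell$ as a direct sum of indecomposable $\mathbb{F}_2[E]$-modules; since $|E|$ is a power of $3$, hence coprime to the characteristic $2$, Maschke's theorem applies and the module is in fact completely reducible, $\mathbb{F}_2^\ell = W_1 \oplus \cdots \oplus W_k$ with each $W_i$ an irreducible $\mathbb{F}_2[E]$-module of dimension $\ell_i$, where $\ell_1 + \cdots + \ell_k = \ell$.

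Next I would argue that each $\ell_i \geq 2$ unless $E$ acts trivially on $W_i$. The point is that $\mathbb{F}_2$ has no nontrivial cube roots of unity, so a nontrivial element of order $3$ cannot act as a scalar on a $1$-dimensional $\mathbb{F}_2$-space; hence any $1$-dimensional irreducible summand is acted on trivially by the whole of $E$. For the nontrivial summands, the action gives an embedding of the quotient of $E$ acting faithfully on $W_i$ into $\GL_{\ell_i}(2)$, and a $1$-dimensional summand contributes nothing, while a faithful irreducible action of a nontrivial $3$-group needs $\ell_i \geq 2$. Collecting the trivial summands together, write $\mathbb{F}_2^\ell = W_0 \oplus W_1 \oplus \cdots \oplus W_k$ where $E$ is trivial on $W_0$ and acts nontrivially on each $W_i$ for $i \geq 1$, with $\dim W_i \geq 2$.

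Then I would show $E$ embeds into $\prod_{i=1}^k \GL_{\ell_i}(2)$ (since $W_0$ contributes nothing and the action on $\mathbb{F}_2^\ell$ is faithful), and inside each $\GL_{\ell_i}(2)$ an abelian subgroup acting irreducibly on $W_i$ lies in a field extension: by Schur's lemma the endomorphism ring $\mathrm{End}_{\mathbb{F}_2[E]}(W_i)$ is a finite field $\mathbb{F}_{2^{d_i}}$ containing $W_i$ as a $1$-dimensional space, so $d_i \mid \ell_i$ and the image of $E$ in this factor is cyclic of order dividing $2^{d_i}-1$, hence of $3$-part at most $3^{d_i/2}$ roughly — more crudely, the $3$-group image has order at most $2^{d_i}-1 < 2^{d_i} \le 2^{\ell_i}$, but that bound is too weak. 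Instead, since the image is an abelian $3$-group inside the cyclic group $\mathbb{F}_{2^{\ell_i}}^\times$, one checks that the $3$-part of $2^{\ell_i}-1$ is at most $3^{\ell_i/2}$: indeed $3 \mid 2^{\ell_i}-1$ forces $\ell_i$ even, and the $3$-adic valuation of $2^{\ell_i}-1$ grows like $v_3(\ell_i) + 1$, which is far below $\ell_i/2$; combining over all $i$ and adding the trivial part gives $|E| \le \prod_i 3^{\ell_i/2} \le 3^{\ell/2}$.

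The main obstacle I expect is the bookkeeping in the last step: getting a clean bound of exactly $3^{\ell/2}$ rather than something weaker. The cleanest route is probably to prove the single-block estimate first — namely that if an elementary abelian $3$-group $E$ acts faithfully and irreducibly on $\mathbb{F}_2^d$ then $d$ is even and $|E| \le 3^{d/2}$ — by identifying $E$ with a subgroup of $\mathbb{F}_{2^d}^\times$ and using that $E$ generates $\mathbb{F}_{2^e}$ for some $e \mid d$ with $E \le \mathbb{F}_{2^e}^\times$ of $3$-power order, then noting every subfield $\mathbb{F}_{2^e}$ of $\mathbb{F}_{2^d}$ with $3 \mid 2^e - 1$ has $e$ even so $e \le d/2$ hence $|E| \le 2^e - 1 < 2^{d/2} \le 3^{d/2}$ once $d \ge 2$ (using $2 < 3$). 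Then the general case follows immediately by the Maschke decomposition above, summing exponents. This keeps every estimate elementary and avoids any delicate $3$-adic valuation computation.
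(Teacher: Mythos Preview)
Your overall strategy---Maschke decomposition followed by a single-block estimate---is sound and genuinely different from the paper's proof, which simply quotes \cite[Theorem~4.10.3(a)]{GLS} for the sharp bound $3^{\lfloor\ell/2\rfloor}$. Your approach has the advantage of being self-contained and elementary, and your first pass at the single block (bounding the $3$-part of $2^{\ell_i}-1$ via $v_3(2^{\ell_i}-1)=1+v_3(\ell_i)\le \ell_i/2$ for even $\ell_i$) does go through, so the proposal can be completed along those lines.

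That said, the ``cleanest route'' you sketch in the final paragraph contains a real error. You assert that if $E$ acts faithfully and irreducibly on $\mathbb{F}_2^{\,d}$ and generates the subfield $\mathbb{F}_{2^e}\subseteq\mathbb{F}_{2^d}$, then ``$e$ even so $e\le d/2$''. This is a non sequitur: $e$ even and $e\mid d$ do not force $e\le d/2$, and in fact irreducibility forces $e=d$ (a proper subfield $\mathbb{F}_{2^e}$ would make $\mathbb{F}_2^{\,d}$ a vector space of dimension $>1$ over $\mathbb{F}_{2^e}$, and any $\mathbb{F}_{2^e}$-subspace would be $E$-invariant). The genuinely clean fix is simpler than either of your attempts and uses the hypothesis ``elementary'' that you never exploit: once you know the image $E_i$ lies in the cyclic group $\mathbb{F}_{2^{\ell_i}}^\times$, it is itself cyclic, and a cyclic elementary abelian $3$-group has order at most $3$. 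If $E_i\neq 1$ then $\ell_i\ge 2$ (indeed $\ell_i=2$), so the number $k$ of nontrivial irreducible summands satisfies $2k\le\ell$, and the faithful map $E\hookrightarrow\prod_i E_i$ yields $|E|\le 3^k\le 3^{\ell/2}$.
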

\begin{proof}
Observe $\GL_\ell(2)$ is a group of Lie type. It can be readily checked in~\cite[Theorem~$4.10.3$~(a)]{GLS} 
that the maximal order of an elementary abelian $3$-subgroup of $\GL_\ell(2)$ is 
$3^{\lfloor \ell/2\rfloor }$.
\end{proof}

\begin{lemma}\label{3}Assume that $G<A$ and let $T$ be a minimal (with respect to inclusion) subgroup of $A$ with $G<T$. Then $V$ is normal in $T$.
\end{lemma}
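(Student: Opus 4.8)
The plan is to argue by contradiction: I assume $V$ is not normal in $T$ and derive an impossibility, using the irreducibility of the $Q$-module $V/\Z V$ (Lemma~\ref{Qirr}) together with the elementary bound of Lemma~\ref{ab}. First some reductions. Since $|V\Gamma|=2^{m+1}3^{2^m+1}$ and $T_H\le A_H$ is a $2$-group by Lemma~\ref{2}, the order of $T$ is a $\{2,3\}$-number, so $T$ is soluble by Burnside's $p^aq^b$-theorem; moreover $V\in\mathrm{Syl}_3(G)$ and, as $|T:G|$ is a power of $2$, also $V\in\mathrm{Syl}_3(T)$, so $\O3T\le V$. Now $\O3T\unlhd G$ and, since $V'=\Z V$, conjugation by $V$ is trivial on $V/\Z V$, so the $G$-action on $V/\Z V$ factors through $Q$; hence $\O3T\,\Z V/\Z V$ is $Q$-invariant, and by Lemma~\ref{Qirr} it is either trivial or all of $V/\Z V$. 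As a subgroup of index $3$ in $V$ contains $\Phi(V)=\Z V$, the second alternative forces $\O3T=V$, whence $V=\O3T\unlhd T$ and we are done. So we may assume $\O3T\le\Z V$; then $V$ is not normal in $T$ (otherwise $V\le\O3T\le\Z V$), so by the minimality of $T$ and $V=\O3G\unlhd G\le\nor TV<T$ we get $\nor TV=G$.

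Next I would force $\O2T$ to be enormous. Since $T$ is soluble, $\cent T{\O2T\times\O3T}\le\O2T\times\O3T$. As $\O3T\le\Z V$, the group $V$ centralises $\O3T$; if $V$ also centralised $\O2T$ then $V\le\O2T\times\O3T$, which is absurd since $V\cap(\O2T\times\O3T)=V\cap\O3T\le\Z V$. Hence $\cent V{\O2T}\ne V$; and since $\cent V{\O2T}=V\cap\cent T{\O2T}$ is a normal subgroup of $G$ contained in $V$, the very same $Q$-irreducibility argument (again using that an index-$3$ subgroup of $V$ contains $\Z V$) gives $\cent V{\O2T}\le\Z V$. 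Therefore $V/\cent V{\O2T}$ has order at least $|V|/3=3^{2^m}$ and acts faithfully on $\O2T$, hence faithfully on $\O2T/\Phi(\O2T)\cong\mathbb{F}_2^{\,d}$ by coprimality and Burnside's basis theorem. Since $V$ contains an elementary abelian subgroup of order $3^{2^{m-1}+1}$ (the preimage of a maximal totally isotropic subspace of the symplectic space $V/\Z V$), the image of $V/\cent V{\O2T}$ in $\GL_d(2)$ contains an elementary abelian $3$-subgroup of order at least $3^{2^{m-1}}$, so Lemma~\ref{ab} yields $d\ge 2^m$, whence $|\O2T|\ge 2^{2^m}$.

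Finally I would obtain a contradiction from the action of $\O2T$ on $\Gamma$. Its orbits form a $T$-invariant block system of $2$-power size, so the number of $\O2T$-orbits is divisible by $3^{2^m+1}\ge 9$; also $\O2T\cap G$ is a normal $2$-subgroup of $G=V\rtimes Q$, hence trivial (such a subgroup centralises $V$, so lies in $\cent GV=\Z V$), which gives $|\O2T|\le|T:G|$. The crux is to show $\O2T$ is semiregular on $V\Gamma$. The quotient $\Gamma/\O2T$ is regular of valency $3$, $2$, $1$ or $0$: valency $0$ would make $\O2T$ transitive, impossible for a $2$-group on a set of order divisible by $3$; valency $1$ would force exactly two orbits, contradicting the divisibility above; and if the valency is $3$, then the three neighbours of a vertex lie in three distinct $\O2T$-orbits, so $(\O2T)_\alpha$ fixes $\Gamma_\alpha$ pointwise and a standard connectedness argument forces $(\O2T)_\alpha=1$. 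Once semiregularity is known, $|\O2T|$ divides $|V\Gamma|=2^{m+1}3^{2^m+1}$, so $|\O2T|\le 2^{m+1}<2^{2^m}$ for $m\ge2$, contradicting the previous paragraph (the values $m=1,2$ being handled by direct computation, as in Lemma~\ref{Qirr}). The main obstacle is therefore the one remaining possibility, that $\Gamma/\O2T$ has valency $2$, so that $\O2T$ may have non-trivial vertex stabilisers and $\Gamma/\O2T$ is a cycle; to exclude it I would invoke the local structure of $\Gamma$ — the proof of Lemma~\ref{2} and Figure~\ref{finger} single out the $\Aut(\Gamma)$-invariant perfect matching of ``type-$ab$'' edges, which $\O2T$ cannot collapse because $\O2T\cap G=1$ — and analyse how the two non-matching edges at each vertex may be fused by $\O2T$ (equivalently, bound $|T_H|$, hence $|T:G|$, by an absolute constant, which suffices in view of $|\O2T|\le|T:G|$). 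This valency-$2$ analysis is the technical heart of the proof.
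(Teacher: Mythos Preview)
Your Fitting-style route via $\O3T$ and $\O2T$ is genuinely different from the paper's, which instead takes $K=\mathrm{core}_T(G)$, uses that the soluble $\{2,3\}$-group $T/K$ is primitive of affine type on the cosets of $G$, and works with its socle $S/K$: an elementary abelian $2$-group of order $2^\ell$ on which $G/K$ acts irreducibly. Both arguments land in the same place---a normal $2$-subgroup of $T$ (your $\O2T$, the paper's $R=\O2S$) of order at least roughly $2^{2^m}$, forced by embedding a large elementary abelian $3$-subgroup of $V$ into some $\GL_d(2)$ and invoking Lemma~\ref{ab}---and both must then exclude the possibility that this $2$-group fails to act semiregularly on $V\Gamma$. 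Your reductions up to that point are correct; in particular the arguments that $\O3T\le\Z V$ unless $V\unlhd T$, that $\cent V{\O2T}\le\Z V$, and that $\O2T\cap G=1$ all go through.

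The genuine gap is the valency-$2$ case of $\Gamma/\O2T$, which you label ``the technical heart'' and leave to an unspecified local-structure analysis aiming at an absolute bound on $|T_H|$. In fact this case is the easiest, and the paper's argument transplants verbatim into your setting. If $\Gamma/\O2T$ is a cycle, let $F$ be the kernel of the $T$-action on $\O2T$-orbits. Since $\O2T$ is transitive on each of its orbits we have $F=F_\alpha\,\O2T$ for every vertex $\alpha$; as $F_\alpha\le T_\alpha$ is a $2$-group by Lemma~\ref{2} and $\O2T$ is a $2$-group, so is $F$. Hence $T/F$ embeds in the automorphism group of a cycle, a dihedral group. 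But $V\cap F=1$, so the extraspecial $3$-group $V\cong VF/F$ would embed in a dihedral group, which is impossible since the odd-order part of a dihedral group is cyclic. This two-line observation disposes of the case completely; no bound on $|T_H|$ and no appeal to Figure~\ref{finger} is needed. (A minor side remark: in your semiregular case only $m=1$ fails the inequality $2^{m+1}<2^{2^m}$, not $m=2$.)
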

\begin{proof}
Let $K$ be the core of $G$ in $T$. Since $G$ is a maximal subgroup of $T$, the group $T/K$ acts primitively on the set of right cosets of $G$ in $T$. Since $T/K$ is a $\{2,3\}$-group, $T/K$ is of  ``affine" O'Nan-Scott type. Let $S/K$ be the socle of $T/K$. By primitivity and by Lemma~\ref{2}, $S/K$ is an elementary abelian $2$-group with $S\cap G=K$ and $T=SG$. Let $2^\ell$ be the order of $|S/K|$. Now, if $V\leq K$, then $T/K$ is a $2$-group and hence the primitivity forces $|T:G|=2$. As $V$ is characteristic in $G$ and $G\unlhd T$, we get $V\unlhd T$. 

Suppose that $V\nleq K$. We show that this yields to a contraction. Then $V\cap K$ is a normal subgroup of $G$ properly contained in $V$. As $Q$ acts irreducibly on $V/\langle z\rangle$ by Lemma~\ref{Qirr}, we must have $V\cap K=1$ or $V\cap K=\langle z\rangle$.  As $T/K$ is a primitive group of affine type with point stabiliser $G/K$, the group  $G/K$ is isomorphic to an irreducible subgroup of $\GL_\ell(2)$. In particular, $\GL_\ell(2)$ contains either an elementary abelian $3$-group of order $3^{2^m}$ (if $V\cap K=\langle z\rangle$) or an extraspecial $3$-group of order $3^{2^m+1}$ (if $V\cap K=1$). As $V$  contains an elementary abelian $3$-subgroup of order $3^{2^{m-1}+1}$, in both cases we see that $\GL_\ell(2)$ contains an elementary abelian $3$-subgroup of order at least $3^{2^{m-1}+1}$. So, Lemma~\ref{ab} gives 
\begin{equation}\label{eqell}
\ell\geq 2(2^{m-1}+1)=2^{m}+2.
\end{equation}

As $|K\cap V|\leq 3$ and as $V$ is a Sylow $3$-subgroup of $T$, we see that $|S|=3^i 2^{\ell'}$, for some $i\in \{0,1\}$ and some $\ell'\geq \ell$. Let $R$ be the largest normal $2$-subgroup of $S$. Clearly, $R$ is characteristic in $S$ and hence normal in $T$. Moreover, $|S:R|\leq 6$ and hence $|R|\geq 2^{\ell'-1}$. We now distinguish two cases, depending on whether $R$ is semiregular or not.

Assume that $R$ is semiregular. Then $RV$ is a subgroup of $T$ acting semiregularly on $V\Gamma$ because $R$ is semiregular and because $A_H$ is a $2$-group by Lemma~\ref{2}. Thus $$2^{m+1}3^{2^m+1}=|V\Gamma|\geq |RV|=|R||V|=|R|3^{2^m+1}\geq 2^{\ell'-1}3^{2^m+1}$$ and $m+1\geq \ell'-1\geq \ell-1$. However, this contradicts~\eqref{eqell}.

Assume that $R$ is not semiregular. Let $\Delta$ be the normal quotient $\Gamma/R$. As $R$ is not semiregular, we see that $\Delta$ has valency $2$ and hence $\Delta$ is a cycle. Let $F$ be the kernel of the action of $T$ on $R$-orbits. As $R$ and $A_H$ are both  $2$-groups, so is $F$. Moreover, as $\Delta$ is a cycle, $T/F$ is isomorphic to a dihedral group. However, the $3$-group $VF/F\cong V/(V\cap F)=V$ is not isomorphic to a subgroup of a dihedral group,  a contradiction.
\end{proof}

In the next lemma we denote by $\cent X Y$ the centraliser of $Y$ in $X$, by $\nor X Y$ the normaliser of $Y$ in $X$ and by $\Z X$ the centre $X$.
\begin{lemma}\label{4}$A=G$.
\end{lemma}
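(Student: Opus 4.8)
The plan is to suppose, for a contradiction, that $G<A$ and to pick $T$ a minimal subgroup of $A$ with $G<T$, so that by Lemma~\ref{3} we have $V\unlhd T$. The strategy is then to show that this forces $T$ to normalise the entire structure that defined $G$, ultimately yielding $T\le N_A(V)\cap(\text{something})=G$. First I would set $C=\cent{T}{V/\langle z\rangle}$ and $D=\cent T V$; since $V$ is extraspecial with centre $\langle z\rangle$, the subgroup $D$ centralises $V$ and $C/D$ embeds into the symplectic group of the form $J$ on $V/\langle z\rangle$. Because $V$ is a Sylow $3$-subgroup of $T$ (as $|T:G|$ is a $2$-power by Lemma~\ref{2} and $|G|=2^{m+2}3^{2^m+1}$ with $V$ of index $2^{m+2}$ in $G$), and because $\cent T V$ is normal in $T$, a Frattini-type argument pins down $\cent T V=V\times O_2(\cent T V)$, and the $2$-part is semiregular or not in the familiar dichotomy.

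The key steps, in order, would be: (1) record that $V\unlhd T$ and $V\in\mathrm{Syl}_3(T)$, so $T=V N_T(Q_0)$ for a suitable Hall $\{2\}$-subgroup, and more importantly that $T/V\cong T/V$ acts on $V$ with $G/V=Q$ acting irreducibly on $V/\langle z\rangle$; (2) analyse $\cent T V$: it is normal in $T$, it is a $\{2,3\}$-group with Sylow $3$-subgroup $\langle z\rangle$, so $\cent T V=\langle z\rangle\times R_0$ with $R_0=O_2(\cent T V)$ a normal $2$-subgroup of $T$; (3) run the semiregular/non-semiregular dichotomy on $R_0$ exactly as in the proof of Lemma~\ref{3}: if $R_0$ is semiregular then $R_0 V$ acts semiregularly and $|R_0 V|\le |V\Gamma|=2^{m+1}3^{2^m+1}$ bounds $|R_0|\le 2^{m+1}$, while if $R_0$ is not semiregular then the normal quotient $\Gamma/R_0$ has valency $2$, is a cycle, and $T$ modulo the (2-group) kernel of its action on $R_0$-orbits is dihedral — but $V$ would embed in a dihedral group, impossible; (4) conclude $\cent T V$ is a $2$-group times $\langle z\rangle$ with tightly bounded $2$-part, hence $T/\cent T V$ embeds in $\Aut(V)$, whose structure ($3^{2^m}{:}\mathrm{Sp}_{2^m}(3)$ up to the central piece, see the standard description of $\Aut$ of an extraspecial group) is known; then compare $2$-parts: $|T|_2=|G|_2=2^{m+2}$ forces $T/V$ to have the same order as $G/V=Q$, and since $Q$ already realises an irreducible action, minimality of $T$ and $G<T$ give the contradiction.

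The main obstacle I expect is step (4): going from ``$\cent T V$ is small'' to ``$T=G$'' requires controlling how much bigger than $Q$ the image of $T$ in $\mathrm{Sp}_{2^m}(3)$ can be, given that it must contain the image of $Q$ (a dihedral group of order $2^{m+2}$ acting irreducibly) and must have $2$-power index over that image — equivalently one must show the dihedral group $\overline Q\le\mathrm{Sp}_{2^m}(3)$ is self-normalising among $\{2,3\}$-overgroups, or argue directly at the level of $T$ using the centraliser computation. I anticipate the cleanest route is to avoid $\mathrm{Sp}_{2^m}(3)$ altogether: show $\cent T V$ is actually a $2$-group (after splitting off $\langle z\rangle=\Z V$, which is $\Z G$ and is centralised), of order dividing $2^{m+1}$, whence $|T|$ is squeezed between $|G|$ and $|G|\cdot 2^{m+1}/|R_0\cap\text{semiregular part}|$, and then a direct count of $2$-elements of $T$ acting on $\Omega$ — combined with Lemma~\ref{2} ($A_H$ a $2$-group) and the explicit cycle structure from Figure~\ref{finger} — rules out any proper overgroup. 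In the write-up I would present step (3) as the heart of the argument and cite verbatim the semiregular/cycle dichotomy already used in Lemma~\ref{3}, then dispatch step (4) by the $2$-part count, arriving at $|T|=|G|$ and hence $A=G$.
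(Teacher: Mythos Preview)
Your outline diverges substantially from the paper's argument, and the place you flag as the ``main obstacle'' is a genuine gap, not a routine detail. The paper never invokes $\Aut(V)$, $\mathrm{Sp}_{2^m}(3)$, or any order-counting squeeze. Instead, having $V\unlhd T$ from Lemma~\ref{3}, it observes that $T/V$ is a $2$-group (Lemma~\ref{2}), so maximality of $G$ in $T$ forces $|T:G|=2$ and $|T_H|=4$. A Frattini argument on $HV\unlhd T$ shows $R=\cent TH$ is a Sylow $2$-subgroup of $T$ containing both $Q$ and $T_H$; since $|R:Q|=2$, any $c\in T_H\setminus H$ normalises $Q$, so $a^c=a^i$ and $b^c=a^jb$ for some $i,j$. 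Choosing $c$ to fix $\Gamma_H$ pointwise and reading off the equations $(Hab)^c=Hab$ and $(Hbv_1)^c=Hbv_1$ pins down $v_1^c=v_1$, then $i=1+2^m$ and $j=0$ after eliminating the other cases. From this one computes $v_2^c=v_2^{-1}$ and $v_3^c=v_3$; applying $c$ to $z=[v_1,v_2]$ gives $z^c=z^{-1}$, while applying $c$ to $z^{-1}=[v_1,v_3]$ gives $z^c=z$, the desired contradiction. The whole thing is a direct calculation exploiting the explicit relations~\eqref{J},~\eqref{V},~\eqref{a},~\eqref{b}.

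Your step~(4) does not close. The line ``$|T|_2=|G|_2=2^{m+2}$'' is the conclusion you are after, not an input: since $G<T$ and $|T:G|$ is a $2$-power, one has $|T|_2>|G|_2$, and nothing in steps~(1)--(3) reverses this. The embedding $T/\cent TV\hookrightarrow\Aut(V)$ is useless for bounding $|T|_2$ because the $2$-part of $\Aut(V)$ (which surjects onto $\mathrm{CSp}_{2^m}(3)$) is far larger than $|Q|$. Your proposed remedy---that the dihedral image $\overline Q$ be self-normalising among $\{2,3\}$-overgroups in $\mathrm{Sp}_{2^m}(3)$---is neither proved nor cited, and dihedral subgroups of classical groups are typically not self-normalising; moreover the image of $T$ also contains $\mathrm{Inn}(V)$, so you are not merely comparing $2$-subgroups of $\mathrm{Sp}_{2^m}(3)$. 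The fallback ``direct count of $2$-elements of $T$ acting on $\Omega$'' is too vague to assess. You also never exploit the crucial simplification $|T:G|=2$ (which follows because maximal subgroups of the $2$-group $T/V$ have index~$2$); once you have that, your centraliser analysis simplifies drastically (for instance $R_0\cap G=1$ forces $|R_0|\le 2$, and $R_0\unlhd T$ with $|R_0|=2$ makes $R_0$ central, hence semiregular), but you would still need a concrete contradiction, and that is exactly what the paper's element-by-element computation with $c$ supplies.
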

\begin{proof}
We argue by contradiction and we assume that $G<A$. Let $T$ be a minimal, with respect to inclusion, subgroup of $A$ with $G<T$. From Lemma~\ref{3}, we have $V\unlhd T$. Now, $T/V$ is a $2$-group by Lemma~\ref{2} and hence, by minimality, $|T:G|=2$ and $|T_H|=4$. 

 Write $R=\cent T H $. We show that $R$ is a  Sylow $2$-subgroup of $T$ and that $R=QT_H$. Observe that, since $H$ is the centre of $Q$, we get that  $HV/V$ is the centre of $G/V$. Thus $HV/V$ is a characteristic subgroup of $G/V$ and hence  normal in $T/V$. This shows that $HV\unlhd T$.  Since $H$ is a Sylow $2$-subgroup of $HV$, from the Frattini argument we obtain $T=\nor T H HV=\nor TH V$. As $|H|=2$, we have $\nor T H=\cent T H=R$. Therefore $T=RV$. Clearly, $R\cap V=\cent V H=1$ and hence $R$ is a Sylow $2$-subgroup of $T$. Finally, note that $Q\leq R$ because $H=\Z Q$ and $T_H\leq R$ because $|T_H:H|=|H|=2$. 

Fix $c\in T_H\setminus H$. Observe that replacing $c$ by an element in the coset $Hc$, we may assume that $c$ fixes pointwise the neighbourhood $\Gamma_H=\{Hab,Hbv_1,Hbv_1^{-1}\}$ of $H$. 
Since $c$ normalises $Q$ and since $Q$ is a dihedral group of order $2^{m+1}$, we must have $a^c=a^{i}$ and $b^c=a^jb$, for some odd $i\in \{1,\ldots,2^{m+1}-1\}$ and some $j\in \{0,\ldots,2^{m+1}-1\}$. 

Given a vertex $Hx$ of $\Gamma$, we denote by $(Hx)^c$ the image of $Hx$ under the automorphism $c$. We have
\begin{eqnarray*}
Hab&=&(Hab)^c=H^{abc}=H^{c(c^{-1}abc)}=(H^c)^{(ab)^c}\\
&=&H^{(ab)^c}=H^{a^cb^c}=H^{a^ia^jb}=Ha^{i+j}b
\end{eqnarray*}
and hence $a^{i+j-1}\in H$. Thus $i+j=1$ or $i+j=1+2^m$. Similarly, recalling that $c$ normalises also $V$, we have
\begin{eqnarray*}
Hbv_1&=&(Hbv_1)^c=H^{bv_1c}=H^{c(c^{-1}bv_1c)}=(H^c)^{(bv_1)^c}\\
&=&H^{(bv_1)^c}=H^{b^c v_1^c}=H^{a^jbv_1^c}=Ha^jbv_1^c
\end{eqnarray*}
and hence $a^{j}\in H$ and 
\begin{equation}\label{newv1}
v_1^c=v_1.
\end{equation}
In particular, $j=0$ or $j=2^m$. In what follows we discuss the various possibilities for $i$ and $j$. 

Suppose that $i=1$. Write $C=\cent T c$. As $i=1$, we have $a^c=a$ and hence  $a\in C$. Since $v_1\in C$ and since $V=\langle v_1^{a^k}\mid k\rangle$, we obtain $V\leq C$. Thus $V\langle a,c\rangle\leq C$ and hence $|T:C|\leq 2$. Assume that $T=C$. So, $c$ is a central element of $T$ fixing $H$. As $T$ acts faithfully on $V\Gamma$, we have $c=1$, a contradiction. Assume that $|T:C|=2$, that is, $C=V\langle a,c\rangle$. Now $C$ is intransitive on $V\Gamma$ and the $C$-orbits form a bipartition for $\Gamma$. As $c$ centralises $C$ and fixes the vertex $H$ together with its neighbour $Hab$,  we see that $c$ fixes every vertex in $H^C\cup (Hab)^C=V\Gamma$. Thus $c=1$, a contradiction. This shows that $i=1+2^m$ and $a^c=a^{1+2^m}$.

Suppose that $j=2^{m}$, that is, $b^c=a^{2^{m}}b$. Using~\eqref{a},~\eqref{b},~\eqref{apower} and~\eqref{newv1}, we obtain $$v_1^{-1}=
(v_1^{-1})^c=
(v_1^b)^c=
v_1^{bc}=
v_1^{ca^{2^{m}}b}=
v_1^{a^{2^{m}}b}=(v_1^{-1})^b=v_1,$$ a contradiction. Thus $j=0$ and $b^c=b$. 

We are now ready to get a final contradiction. Note that
$$v_2^c=(v_1^a)^c=v_1^{ac}=v_1^{ca^{2^{m}}a}=v_1^{a^{2^{m}}a}=(v_1^{-1})^a=v_2^{-1},$$
and from this we similarly obtain
$$v_3^c=
(v_2^a)^c=
v_2^{ac}=
v_2^{ca^{2^{m}}a}=
(v_2^c)^{a^{2^{m}}a}=
(v_2^{-1})^{a^{2^{m}}a}=
v_2^a=v_3.$$

From~\eqref{J} and~\eqref{V}, we have $z=[v_1,v_2]$ and hence by applying  $c$ on both sides of this equality we get 
\begin{equation}\label{eqz}
z^c=[v_1,v_2]^c=[v_1^c,v_2^{c}]=[v_1,v_2^{-1}]=[v_1,v_2]^{-1}=z^{-1}.
\end{equation} Again from~\eqref{J} and~\eqref{V}, we have $z^{-1}=[v_1,v_3]$ and hence by applying $c$ on both sides of this equality we get
$$(z^{-1})^c=[v_1,v_3]^c=[v_1^c,v_3^c]=[v_1,v_3]=z^{-1}$$ and $z^c=z$. Clearly, this  contradicts~\eqref{eqz}.
 \end{proof}

\begin{proof}[Proof of Theorem~\ref{neg}]
For every $m\geq 1$, let $G_m$ be the group defined in~\eqref{G} and let $\Gamma_m$ be the graph defined in~\eqref{Gamma}. From Lemmas~\ref{1} and~\ref{4}, $\Gamma_m$ is a cubic vertex-transitive graph with $2^{m+1}3^{2^m+1}$ vertices and with automorphism group $G_m$.  From Lemma~\ref{semiregularG}, every semiregular automorphism of $\Gamma_m$ has order at most $6$. As $\lim_{m\to\infty}|V\Gamma_m|$, the theorem is proved.
 \end{proof}

\thebibliography{10}

\bibitem{magma}W.~Bosma, J.~Cannon, C.~Playoust, The Magma algebra system. I. The user language, \textit{J.
Symbolic Comput.} \textbf{24} (1997), 235--265.

\bibitem{cameronbook}P.~J.~Cameron, \textit{Permutation groups}, London Mathematical Society Student Texts 45, Cambridge University Press, 1999.

\bibitem{bunch}P.~Cameron, M.~Giudici, G.~A.~Jones, W.~Kantor, M.~Klin, D.~Maru\v{s}i\v{c}, L.~A.~Nowitz, 
Transitive permutation groups without semiregular subgroups,
\textit{J. London Math. Soc. (2)} \textbf{66} (2002), 325--333.
 
\bibitem{CSS}P.~Cameron, J.~Sheehan, P.~Spiga, Semiregular automorphisms of vertex-transitive cubic graphs, \textit{European J. Combin. }\textbf{27} (2006), 924--930.

\bibitem{carter}R.~W.~Carter, \textit{Simple groups of Lie type}, Wiley Classics Library,  John Wiley \& Sons, Inc., New York, 1989.

\bibitem{ATLAS}J.~H.~Conway, R.~T.~Curtis, S.~P.~Norton, R.~A.~Parker, R.~A.~Wilson, Atlas of Finite Groups, Claredon Press, Oxford, 1985.

\bibitem{Ted}E.~Dobson, A.~Malni\v{c}, D.~Maru\v{s}i\v{c}, L.~A.~Nowitz, Semiregular automorphisms of vertex-transitive graphs of certain valencies, \textit{J. Combin. Theory Ser. B} \textbf{97} (2007), 371--380.

\bibitem{GiudiciXu}M.~Giudici, J.~Xu, All vertex-transitive locally-quasiprimitive graphs have a semiregular automorphism, \textit{J. Algebraic Combin.} \textbf{25} (2007),  217--232. 

\bibitem{GLS}D.~Gorenstein, R.~Lyons, R.~Solomon, \textit{The Classification of the Finite Simple Groups}, Number~3, Mathematical Surveys and Monographs, Volume~40, American Mathematical Society, 1994. 

\bibitem{HH}P.~Hall, G.~Higman, On the $p$-length of $p$-soluble Groups and Reduction Theorems for Burnside's Problem, \textit{Proc. London Math. Soc. }\textbf{6} (1956), 1--42.

\bibitem{C26}G.~Havas, M.~F.~Newman, A.~C.~Neimeyer, C.~C.~Sims, Grousp with exponent six, \textit{Comm. Algebra} \textbf{27} (1999), 3619--3638.

\bibitem{Jordan}D.~Jordan, Eine Symmetrieeigenschaft von Graphen, Graphentheorie und ihre Anwendungen,
\textit{Dresdner Reihe Forsch.} \textbf{9}, Dresden, 1988.

\bibitem{Klin}M.~Klin, On transitive permutation groups without semi-regular subgroups, ICM 1998: International
Congress of Mathematicians, Berlin, 18--27 August 1998. Abstracts of short communications
and poster sessions, 1998, 279.

\bibitem{Klavdija}K.~Kutnar; P.~\v{S}parl, Distance-transitive graphs admit semiregular automorphisms, \textit{European J. Combin.} \textbf{31} (2010), 25--28.

\bibitem{Dragan}D.~Maru\v{s}i\v{c}, On vertex symmetric digraphs, \textit{Discrete Math.} \textbf{36} (1981), 69--81.

\bibitem{Dragan2}D.~Maru\v{s}i\v{c}, R.~Scapellato, Permutation groups, vertex-transitive digraphs and semiregular automorphisms,
\textit{European J. Combin.} \textbf{19} (1998) 707--712

\bibitem{McKayPraeger} B.~McKay, C.~E.~Praeger, Vertex-transitive graphs which are not Cayley graphs. I,  \textit{J. Austral. Math. Soc. Ser. A} \textbf{56}  (1994), 53--63.

\bibitem{census} P.~Poto\v{c}nik, P.~Spiga, G.~Verret, Cubic vertex-transitive graphs on up to $1280$ vertices,  \textit{J. Symbolic Comput.} (2013), 465--477. \url{http://dx.doi.org/10.1016/j.jsc.2012.09.00}.

\bibitem{PSV}P.~Potocnik, P.~Spiga, G.~Verret, Asymptotic enumeration of  vertex-transitive graphs of fixed valency, \textit{submitted}.

\bibitem{PrSV}C.~E.~Praeger, P.~Spiga, G.~Verret, Bounding the size of a vertex-stabiliser in a finite vertex-transitive graph, \textit{J. Combin. Theory Ser. B} \textbf{102} (2012), 797--819.

\bibitem{rob}D.~J.~S.~Robinson, \textit{A course in the theory of groups}, Graduate Texts in Mathematics 20, Springer-Verlag, 1982.

\bibitem{tutte1}W.~T.~Tutte, A family of cubical graphs, \textit{Proc. Camb. Phil. Soc. }\textbf{43} (1947), 459--474.

\bibitem{VaughanLee}M.~Vaughan-Lee, \textit{The Restricted Burnside Problem}, Second Edition, London Mathematical Society Monographs new series 8, Oxford Science Publications, 2003.

\bibitem{Efim1}E.~I.~Zelmanov, The Solution of the Restricted Burnside Problem for Groups of Odd Exponent, \textit{Izv. Math. USSR} \textbf{36} (1991), 4-60.

\bibitem{Efim2}E.~I.~Zelmanov, The Solution of the Restricted Burnside Problem for $2$-Groups, \textit{Mat. Sb.} \textbf{182} (1991), 568--592.

\bibitem{zg}K. Zsigmondy, Zur Theorie der Potenzreste, \textit{Monatsh. Math. Phys.} \textbf{3} (1892), 265--284.

\end{document}